\renewcommand{\mathcal}{\mathscr}
\DeclareFontFamily{OT1}{pzc}{}
\DeclareFontShape{OT1}{pzc}{m}{it}{<-> s * [1.10] pzcmi7t}{}
\DeclareMathAlphabet{\mathpzc}{OT1}{pzc}{m}{it}
\def\R {\mathbb{R}}
\def\Z {\mathbb{Z}}
\def\div{\mathrm{div}}
\def\eps{\varepsilon}
\newcommand{\pcors}{\mathcal{p}}
\newcommand{\qcors}{\mathcal{q}}
\newtheorem{proposition}{Proposition}[section]
\newtheorem{theorem}[proposition]{Theorem}
\newtheorem*{theorem*}{Theorem}
\newtheorem{lemma}[proposition]{Lemma}
\theoremstyle{definition}
\newtheorem{remark}[proposition]{Remark}
\numberwithin{equation}{section}
\title{On stationary fractional mean field games}
\thanks{This work has been supported by the
Andrew Sisson Fund 2017, 
INdAM Intensive Period
``Contemporary Research in elliptic PDEs and related topics'',
the Fondazione CaRiPaRo
Project ``Nonlinear Partial Differential Equations:
Asymptotic Problems and Mean-Field Games", Project PRA 2017 of the University of Pisa "Problemi di ottimizzazione e di evoluzione in ambito variazionale",
 the INdAM-GNAMPA projects 
``Problemi nonlocal e degeneri nello spazio Euclideo" and ``Tecniche EDP, dinamiche e probabilistiche per lo studio di problemi asintotici".}
\author[A. Cesaroni]{Annalisa Cesaroni}
\author[M. Cirant]{Marco Cirant}
\author[S. Dipierro]{Serena Dipierro}
\author[M. Novaga]{Matteo Novaga}
\author[E. Valdinoci]{Enrico Valdinoci}
\address{
{\em Annalisa Cesaroni}:
Dipartimento di Scienze Statistiche,
Universit\`a di Padova, Via Battisti 241/243, 35121 Padova, Italy. {\tt annalisa.cesaroni@unipd.it} }
\address{
{\em Marco Cirant}:
Dipartimento di Matematica Tullio Levi Civita,
Universit\`a di Padova, Via Trieste 63, 35121 Padova, Italy. {\tt cirant@math.unipd.it} }
\address{
{\em Serena Dipierro}:
Dipartimento di Matematica, Universit\`a di Milano,
Via Saldini 50, 20133 Milan, Italy, and
School of Mathematics
and Statistics,
University of Melbourne, 813 Swanston St, Parkville VIC 3010, Australia. 
{\tt sdipierro@unimelb.edu.au}}
\address{{\em Matteo Novaga}: Dipartimento di Matematica,
Universit\`a di Pisa, 
Largo Pontecorvo 5, 56127 Pisa,
Italy. {\tt matteo.novaga@unipi.it}}
\address{{\em Enrico Valdinoci}:
School of Mathematics
and Statistics,
University of Melbourne, 813 Swanston St, Parkville VIC 3010, Australia,
Dipartimento di Matematica, Universit\`a di Milano,
Via Saldini 50, 20133 Milan, Italy, and IMATI-CNR, Via Ferrata 1, 27100 Pavia,
Italy. {\tt enrico.v@unimelb.edu.au}}
\date{} 
\begin{document}

\begin{abstract}
We provide  an existence result for  stationary fractional mean field game systems, with fractional exponent greater than ${1/2}$. 
In the case in which the coupling is a nonlocal regularizing potential, we obtain  existence of solutions under general assumptions on the Hamiltonian. 
In the case of local coupling, we restrict to the subcritical regime, that is the case in which the diffusion part of the operator dominates the Hamiltonian term. 
We consider both the case of local bounded coupling and of local unbounded coupling with power-type growth. In this second regime, 
we impose some conditions on the growth of the coupling and on the growth of the Hamiltonian with respect to the gradient term. 

\bigskip
 
\noindent
{\footnotesize \textbf{AMS-Subject Classification}}. {\footnotesize 35R11, 49N70, 35J47, 35Q84, 91A13.}\medskip

\noindent
{\footnotesize \textbf{Keywords}}. {\footnotesize Ergodic  Mean-Field Games,  Fractional  Fokker-Planck equation, Fractional viscous Hamilton-Jacobi equation.}

\end{abstract}

 \maketitle

\tableofcontents

\section{Introduction}\label{sec:intro}
Mean Field Games (briefly MFG) is a very recent mathematical theory
modelling the macroscopic behaviour of a large population of indistinguishable agents
who wish to minimize a cost depending on the distribution of all the agents  in a noisy environment.
It was proposed independently in~2006 by Lasry and Lions (\cite{ll})
and Huang, Caines and Malham\`e (\cite{hcm}), and it has a number of potential applications,
from economics and finance (growth theory, environmental policy,
formation of volatility in financial markets), to engineering and models of social systems,
such as crowd motion and traffic control. For the development of the theory
and several applications, we refer to the monographs~\cite{bf},
\cite{gomesbook}, and to the references therein. 

Up to now, the noisy environment in which the average game takes place has been usually
modeled  by standard diffusion.
Our aim is to consider a more general framework for the disturbances, and
in particular we take into account processes driven by  pure jump {L}\'evy processes.
This generalization is interesting for applications to financial models, where
jump processes are widely used to model sudden crisis and crashes on the markets (see e.g. the monograph \cite{rama} 
for  a detailed description of  motivations for 
the use of processes with jumps in financial models and examples of applications of {L}\'evy processes in risk management). 

More precisely, even if still heuristically, MFG are noncooperative differential games, with a continuum of players,  each of whom
controls his own  trajectory in the state space, which in our case is the $N$-dimensional torus. 
The trajectory of each player is affected by a  fractional Brownian motion: it is defined by a  stochastic differential equation
\[dX_t=v_t dt+ dZ_t,\] 
where $v_t$ is the control and  $Z_t$ is a $N$-dimensional, $2s$-stable pure jumps  {L}\'evy process, with associated {L}\'evy measure 
(which describes the distribution of jumps of the process) given by 
$\nu(dx)=\frac{1}{|x|^{N+2s}}dx$ (see~\cite{a}).
Each player wants to minimize the long time average cost
\[\liminf_{T\to+\infty}\, \mathbf{E} \left[ \frac{1}{T}\, \int_0^T L(v_s) +f(X_s, m(X_s))ds\right],\]
where $m(x)$ denotes the density distribution of the  population at point $x$, $L(\qcors)$ is a superlinear convex  
function and $f$ is a cost function taking into account 
the position of each player and the density of the whole population. 
We look for a stable configuration, that is a Nash equilibrium: 
a configuration where, keeping into account the choices of the others, no player would spontaneously decide to change his own choice.
In an equilibrium regime, the corresponding density of the average player is stable as time goes to $+\infty$, and coincides with the  population density $m$. 

{F}rom a  PDE point of view,
this  equilibrium configuration  is characterized by a system of a fractional Hamilton-Jacobi
equation with Hamiltonian $H$ given by the Legendre transform of $L$, coupled with  a fractional stationary
Fokker-Planck equation
describing the long-time distribution of all agents, moving according to the control which minimizes the long time average cost
(see \cite{ll}, \cite{gomesbook}). 
\smallskip

We recall that MFG with jumps have been very recently considered in the literature by using a completely different approach based on 
probabilistic techniques    in  \cite{kol}, where the  theory of  non-linear Markovian propagators is used, and in \cite{ca}, where the players control the intensity of  jumps. 

\medskip 

In this paper we start with the analysis of  stationary fractional
mean field game systems, in the periodic setting,
with fractional exponent greater than~$\frac{1}{2}$.
We restrict  to this regime since  the fractional Laplacian
operator with drift  presents different properties
depending on the fact that the fractional exponent is greater
or lower than~$\frac{1}{2}$.
In the   case $s>\frac{1}{2}$, the diffusion component dominates the drift term, and so, 
the drift term can be treated as a lower-order term. Moreover  the  kernel of the  linear operator defined
 by the fractional Laplacian with drift  can be estimated in terms of the fractional heat kernel  (see \cite{bj}). 
We provide in this paper an accurate analysis of
steady state solutions to the fractional Fokker-Planck
equations in the periodic setting, with bounded drift and
fractional exponent~$s$ greater than~$\frac{1}{2}$, see
Section~\ref{sectionfp}.  
On the other hand, we discuss  in Section~\ref{submezzo}
some examples in the case of fractional Laplacian operator
with fractional exponent $s$  lower than~$\frac{1}{2}$ and bounded drift,
which suggest  that the study
of fractional MFG  in the range $s<\frac{1}{2}$
presents structural differences with respect to the
range~$s>\frac{1}{2}$. 

\smallskip 

We consider the following ergodic fractional MFG on the~$N$-dimensional torus $Q:=\R^N/\Z^N$. The goal is to
find a constant~$\lambda\in\R$ for which there exists a couple $(u,m)$ solving
\begin{equation}\label{mfg2}\begin{cases}
(- \Delta)^s u+ H(\nabla u)+\lambda =f(x,m),\\
(-\Delta )^s m-\div(m \nabla H(\nabla u) )=0, \\
\int_{Q} m\, dx=1. \end{cases}
\end{equation}

Here we consider the fractional
Laplacian~$(-\Delta)^s=(-\Delta_Q)^s$ defined on the torus~$Q$
with fractional parameter~$s\in\left(\frac{1}{2},1\right)$. 
This operator can be defined directly by the multiple Fourier series
\[(-\Delta_Q)^s u(x):= \sum_{k\in \Z^N} |k|^{2s} c_k(u) e^{ik\cdot x}\] where $c_k$ are the Fourier coefficients of $u:Q\to \R$ (see \cite{rs}).
We identify functions defined on~$Q$ with their
periodic extensions to~$\R^N$,
and it is possible to show that for such functions~$u$, 
the periodic distribution~$(-\Delta_Q)^s u(x)$
coincides with the distributional fractional Laplacian
on~$\R^N$ of~$u$ (see \cite[Theorem A]{rs2}).

We shall assume that $H:\R^N\to\R$ is locally Lipschitz continuous and  strictly convex, and that
there exist some $C_H > 0$, $K>0$ and $\gamma > 1$
such that, for all $\pcors\in\R^N$,
\begin{equation}\begin{split}\label{Hass}
&C_H |\pcors|^{\gamma} - C_H^{-1} \le H(\pcors) 
\le C_H^{-1} (|\pcors|^{\gamma} + 1), \\ 
&\nabla H(\pcors)\cdot \pcors-H(\pcors)\geq C_H
|\pcors|^\gamma -K
\quad {\mbox{   and  }} \quad   |\nabla H(\pcors)|\leq C_H|\pcors|^{\gamma-1}.
\end{split}\end{equation}

As for the function~$f$, we consider both the case of local and
the case of nonlocal coupling.
We will give more precise
assumptions\footnote{ 
With a slight abuse of notation,
we write~$f[m]$ when we intend the
action of the function~$f$ to a function~$m$
and~$f(\cdot,m)$ when we intend the
map~$x\mapsto f(x,m(x))$.
The two cases are structurally different, since~$f[m]$
takes into account a ``nonlocal setting", in which,
for instance, $f[m]$
can be the convolution of~$f$ with a kernel 
(in particular, $f[m](x)$
does not depend only on~$x$ and on~$m(x)$,
but rather on~$x$ and on all the
values that~$m$ may attain).
A more precise setting
is discussed in Section~\ref{sectionreg}.} in what follows about this.  

Moreover, following \cite{trbook}, 
for~$p>1$ and~$\sigma\geq 0$,
we define the Bessel potential space $H^\sigma_p(Q)$ as
\begin{equation}\label{bessel}
H^\sigma_p(Q) := \Big\{ 
u\in L^p(Q):\ (I-\Delta)^\frac\sigma2 u\in L^p(Q)
\Big\}\qquad{\mbox{with }}\;
\|u\|_{H^\sigma_p(Q)} := \|(I-\Delta)^\frac\sigma2 u\|_{L^p(Q)}\,.
\end{equation}
In this setting, we say that a classical solution to the system \eqref{mfg2} is a
triple~$(u, \lambda, m)\in C^{2s+\theta}(Q)\times 
\R\times H^{2s-1}_p(Q)$, for all 
$\theta<2s-1$ and for all $p>1$.

\smallskip 

Our main result is the following, and it is proved in
Theorems~\ref{solmfgreg}, \ref{solmfgbdd} and~\ref{solmfg}. 

\begin{theorem} \label{TH:MAIN}
Let $s\in\left(\frac{1}{2},1\right)$. Then \eqref{mfg2} admits a classical solution in the following cases.
\begin{enumerate}
\item $\gamma>1$ and~$f:L^{1}(Q)\to \R$ maps
continuously~$C^{\alpha}(Q) $, for some~$\alpha < 2s-1$,
into a bounded subset of $W^{1,\infty}(Q)$.
\item $1<\gamma\leq 2s$ and $f:Q\times [0, +\infty)\to \R$
is continuous and bounded. 
\item $ 1< \gamma< \frac{N}{N-2s+1}$ for $N>1$,
$1<\gamma\leq 2s$ for $N=1$,  and $f:Q\times [0, +\infty)\to \R$
is locally Lipschtiz continuous and satisfies
\begin{equation}\label{gr1}
-Cm^{q-1}-K\leq  f(x, m) \leq C m^{q-1}+K,\end{equation}
for some~$C$, $K>0$ and
\begin{equation}\label{gr2}
1<q<1+\frac{(2s-1)}{N}\frac{\gamma}{\gamma-1}.\end{equation}
\end{enumerate} 
\end{theorem}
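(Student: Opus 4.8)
The plan is to establish all three cases through a single fixed-point scheme, using the fractional Fokker--Planck theory of Section~\ref{sectionfp} together with regularity for the ergodic fractional Hamilton--Jacobi equation; the difference between the three cases will lie entirely in the a priori bounds that feed the fixed-point argument.

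First I would set up the two solution operators. Given a right-hand side $g$ in a suitable H\"older class, I solve the ergodic problem
\[(-\Delta)^s u + H(\nabla u) + \lambda = g \quad \text{in } Q,\]
normalizing $u$ by $\int_Q u\,dx = 0$. Standard ergodic theory for such equations provides a unique constant $\lambda = \lambda(g)\in\R$ and a solution $u\in C^{2s+\theta}(Q)$; since $s>\frac12$ the diffusion dominates the first-order term $H(\nabla u)$, which yields a gradient bound on $\|\nabla u\|_{L^\infty}$ controlled by $\|g\|$ in the relevant norm. Given such a $u$, the drift $b:=-\nabla H(\nabla u)$ lies in $L^\infty(Q)$, and the Fokker--Planck results of Section~\ref{sectionfp} (applicable precisely because $s>\frac12$, so $b$ is a lower-order perturbation of $(-\Delta)^s$) produce a unique probability density $m\in H^{2s-1}_p(Q)$ for every $p>1$, with norms controlled by $\|b\|_{L^\infty}$. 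Composing these two steps defines a map $\mathcal{T}:\mu\mapsto m$ whose fixed points are exactly the solutions of~\eqref{mfg2}. I would then check that $\mathcal{T}$ is continuous and maps a suitable closed convex set into a compact subset of itself, so that Schauder's theorem applies.

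The content of the three cases is the construction of this invariant set, that is, the a priori estimates. In cases~(1) and~(2) the coupling is bounded in the right norm: in case~(1), $f$ sends $C^\alpha$ continuously into a bounded subset of $W^{1,\infty}$, while in case~(2), $f$ is simply bounded; in both situations the right-hand side $g=f(\cdot,\mu)$ (resp.\ $f[\mu]$) is uniformly bounded, so the Hamilton--Jacobi gradient estimate gives a uniform bound on $\|\nabla u\|_{L^\infty}$, hence on the drift, and the Fokker--Planck estimates close the loop immediately. The restriction $\gamma\le 2s$ in case~(2) is the subcritical threshold guaranteeing the gradient bound for a merely bounded right-hand side.

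The main obstacle is case~(3), where $f$ may grow like $m^{q-1}$ and the right-hand side of the Hamilton--Jacobi equation is no longer a priori bounded. Here I would close the estimates by the crossed testing identity typical of mean field games: testing the first equation against $m$ and the second against $u$, using self-adjointness of $(-\Delta)^s$ and integrating by parts, one obtains
\[\lambda = \int_Q m\, f(x,m)\,dx + \int_Q m\big(\nabla H(\nabla u)\cdot\nabla u - H(\nabla u)\big)\,dx.\]
Combined with the coercivity $\nabla H(\pcors)\cdot\pcors - H(\pcors)\ge C_H|\pcors|^\gamma - K$ and with two-sided bounds on $\lambda$, this controls the weighted quantity $\int_Q m|\nabla u|^\gamma\,dx$ in terms of $\int_Q m\,f(x,m)\,dx$. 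The growth bound~\eqref{gr1} relates the latter to $\int_Q m^q\,dx = \|m\|_{L^q(Q)}^q$, and a bootstrap through the fractional Fokker--Planck regularity and the Sobolev embeddings available for $H^{2s-1}_p$ upgrades the resulting integrability to the $L^\infty$ drift bound required by the scheme. The precise exponent conditions $\gamma<\frac{N}{N-2s+1}$ and $q<1+\frac{2s-1}{N}\frac{\gamma}{\gamma-1}$ in~\eqref{gr2} are exactly the thresholds that keep this nonlinear loop subcritical, so that the bootstrap converges rather than diverges; verifying that these quantitative balances genuinely close the argument is the delicate technical heart of the proof.
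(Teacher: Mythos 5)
Your treatment of cases~(1) and~(2) follows the paper's own route (Theorems~\ref{solmfgreg} and~\ref{solmfgbdd}): the HJ-then-Fokker--Planck map plus Schauder, with the invariant set coming from the uniform bound on the right-hand side. Two corrections there. First, in case~(1) the gradient bound does \emph{not} come from ``diffusion dominating $H(\nabla u)$'': for $\gamma>2s$ the Hamiltonian dominates, and the estimate is the Bernstein bound of Theorem~\ref{ergodic2}, which requires the right-hand side to be bounded in $W^{1,\infty}$ --- precisely assumption \eqref{assFnonlocal}; the Ishii--Lions bound of Theorem~\ref{ergodic}, which needs only $\|f\|_{L^\infty}$, is what forces $\gamma\le 2s$ in case~(2). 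Second, in case~(2) the loop does not close ``immediately'' by a direct Schauder argument: with $f$ merely continuous the composition $x\mapsto f(x,\mu(x))$ is not H\"older, so the bootstrap to a classical solution $u\in C^{2s+\theta}(Q)$ and the continuity of the fixed-point map are delicate; the paper sidesteps this by mollifying, $f_\eps[m]=f(\cdot,m\star\chi_\eps)\star\chi_\eps$, applying the nonlocal result of Theorem~\ref{solmfgreg} for each $\eps$, and passing to the limit with uniform estimates. This is repairable, but it is a real step, not a remark.

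Case~(3) contains a genuine gap, and it is also where you diverge from the paper, which proceeds variationally (Section~\ref{sectionunbounded}): minimize the energy \eqref{energia} over $\mathcal{K}$ using the a priori bounds of Proposition~\ref{pstime}, then recover $(u,\lambda)$ by convex duality (min--max plus Rockafellar interchange) and identify $\bar w=-\bar m\nabla H(\nabla u)$, so that no fixed point on the coupled system is ever needed. Your cross-testing identity for $\lambda$ is correct, and the exponent bookkeeping does close for \emph{solutions of the system}: your weighted bound plus \eqref{rstima} gives $\|m\|_{L^q(Q)}^{q(1+\delta)}\le C\bigl(1+\|m\|_{L^q(Q)}^q\bigr)$, then \eqref{mstima} bounds $\|m\|_{C^\alpha(Q)}$, $f(\cdot,m)$ becomes bounded and H\"older, and Theorem~\ref{ergodic} applies since $\gamma<\frac{N}{N-2s+1}<2s$ for $N>1$. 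The flaw is structural: these estimates hold only at fixed points of $\mathcal{T}$, because the identity $\lambda=\int_Q mf(x,m)\,dx+\int_Q m\bigl(\nabla H(\nabla u)\cdot\nabla u-H(\nabla u)\bigr)\,dx$ requires the \emph{same} $m$ in both equations. Along the iteration $\mu\mapsto m$ the input and output measures differ, the identity is unavailable, and since $f$ is unbounded there is no a priori bound on $f(\cdot,\mu)$ with which to start; hence no invariant convex set for Schauder can be exhibited, and your scheme as stated does not produce a solution. You would need a bridge: truncate the coupling, solve the truncated systems by case~(2), apply your cross-testing estimates (which are legitimate there, being estimates on genuine solutions, uniform in the truncation), and pass to the limit --- or run a Leray--Schauder continuation on a parametrized family. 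Relatedly, you invoke ``two-sided bounds on $\lambda$'' without a source: the lower bound comes from your identity, while the upper bound must be extracted separately, e.g.\ by integrating the HJ equation over $Q$ (using $\int_Q(-\Delta)^su\,dx=0$) to get $\lambda\le\int_Q f(x,m)\,dx+C\le C\|m\|_{L^q(Q)}^{q-1}+C$, which then enters the same absorption argument. With the truncation device and this bound made explicit, your route becomes a valid alternative to the paper's duality proof; without them, case~(3) is not established.
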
 

Now, we discuss in more details the results in Theorem~\ref{TH:MAIN}. 

In the case $(1)$, that is in the case in which the coupling $f$
is a smoothing potential, we obtain existence of
solutions to the MFG system 
by taking advantadge of a classical approach given in~\cite{ll},
based on the Schauder Fixed Point Theorem.
To get the existence result in this case,
we use some estimates on the solutions to stationary
Fokker-Planck equations
obtained in Section~\ref{sectionfp}
and a-priori gradient estimates on
solutions of fractional coercive Hamilton-Jacobi equations,
inspired by the
Bernstein method in~\cite{blt}. 

As for the case of the
local coupling, we use a different approach. 
First of all, in order to get a-priori gradient estimates on
solutions of fractional coercive 
Hamilton-Jacobi equations we
cannot use anymore
the Bernstein method, since the function~$x\mapsto f(x,m(x))$
is not in general Lipschtiz continuous.
So, we use the so-called Ishii-Lions method
(see~\cite{bcci1}) to obtain gradient estimates on
solutions of fractional coercive Hamilton-Jacobi equations.
This method requires, in particular, 
that~$\gamma\leq 2s$, where~$\gamma$ is
the growth of the Hamiltonian given in~\eqref{Hass}
and~$s$ is the fractional exponent of the Laplacian.
The gradient estimates in this case 
depend only on the $L^\infty$ norm of the solutions
and of~$f$. 

In case $(2)$, this result permits to conclude the proof of Theorem~\ref{TH:MAIN},
by first regularizing  the potential and then passing to the limit. 

In case $(3)$, in which the local coupling term is unbounded, we use the variational approach, 
which goes back to the seminal work \cite{ll} (see also \cite{cgpt}, \cite{c16}): 
the MFG system is obtained (at least formally) as the optimality condition of an appropriate 
optimal control problem on the fractional Fokker-Planck equation.

First of all, the function $f(x, \cdot)$ can be unbounded
both from below and from above, so in general the
energy associated to the MFG system is not even bounded.
The condition on the growth of $f$ with respect to $m$,
given in~\eqref{gr1} and~\eqref{gr2},
is necessary to get boundedness of the energy associated to the system and then to obtain  existence of minimizers by direct methods. 

Note that our assumption allows us to treat both the case
in which the coupling is an increasing function of $m$,
that is a congestion game, in which players aim
to avoid regions where the population 
has a high density, and the opposite case in which the
coupling  is a decreasing function in $m$,
modelling a game in which every player is attracted by
regions where the density of population is high. 

Finally we point out that the condition on the growth of the
Hamiltonian in~$(3)$ of Theorem~\ref{TH:MAIN}, that is  
\[1< \gamma< \frac{N}{N-2s+1}, \]
is just a technical condition, that can be eliminated once a-priori gradient
estimates on the solutions of fractional coercive Hamilton-Jacobi equations 
depending   only  on the $L^\infty$ norm of the potential term $f$ and not on the $L^\infty$ norm of the solutions $u$ are available.  In the case of the classical Laplacian such a result has been obtained 
by an improved Bernstein method, based also on
Ishii-Lions type arguments, in \cite{clp}.
We believe that such an approach can be adapted to
the fractional case, and this will be the 
topic of future research.

\smallskip 

The paper is organized as follows. In Section~\ref{sectionfp}
we provide some results on a-priori estimates,
 existence and uniqueness of solutions to stationary fractional
Fokker-Planck equations in the periodic setting.
These results should be classical, and well known,
nevertheless due to the lack of a
precise references in the literature, we provide also a
sketch of the proofs. 
In Section~\ref{sectionHJ}, we recall the existing results
about a-priori gradient bounds for solutions to fractional
Hamilton-Jacobi equations with coercive Hamiltonians and
on the solvability of ergodic problems in this setting.
Section~\ref{sectionreg} is devoted to the analysis of MFG
systems in the case of regularizing nonlocal coupling.
Section~\ref{sectionbounded}
contains the existence result for MFG systems with local
bounded coupling. In Section~\ref{sectionunbounded},
we consider fractional MFG systems with local unbounded coupling.
Finally,
Section~\ref{sectionimprovement} contains
the improvement of regularity of solutions of the MFG system
in the case in which the coefficients are more regular, and 
the uniqueness result for increasing coupling terms. 
 
\section{Steady state solutions to fractional Fokker-Planck equations}\label{sectionfp}
We provide here some results on existence, uniqueness and
regularity of steady state solutions to fractional Fokker-Planck
equations in the periodic setting. 

First of all we recall some simple result about Bessel potential spaces. 
We recall that (see \cite{kim})
the norm~$\|\cdot\|_{H^\sigma_p(Q)}$ defined in~\eqref{bessel}
is equivalent to the norm
$$
\|u\| = \|u\|_{L^p(Q)}+\|(-\Delta)^\frac\sigma2 u\|_{L^p(Q)}\,.
$$
Observe that the space $H^\sigma_2 (Q)$ coincides with $W^{\sigma,2}(Q)$. 
Moreover we have the following embedding results. 

\begin{lemma}\label{lemmazero}
For every $\sigma\geq 0$, $p>1$
and $\eps>0$, we get 
\[ H^{\sigma+\eps}_p(Q)\subseteq W^{\sigma,p}(Q)
\subseteq H^{\sigma-\eps}_p(Q),\]
with continuous embeddings. 
Moreover 
\begin{equation}\label{eqmp}
W^{m,p}(Q)= H^m_p(Q)\qquad \textrm{if $m\in \mathbb N$}.
\end{equation}
\end{lemma}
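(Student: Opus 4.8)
The plan is to identify both families with the Littlewood--Paley scales of Besov and Triebel--Lizorkin type and then to read off the inclusions from the standard embedding theorems collected in~\cite{trbook}. Recall first that, for every $\sigma\ge 0$ and $p>1$, the Bessel potential space coincides with the Triebel--Lizorkin space $H^\sigma_p(Q)=F^\sigma_{p,2}(Q)$, this being essentially the Fourier-analytic characterization of $(I-\Delta)^{\sigma/2}$ on the torus. On the other hand, the Sobolev--Slobodeckij space coincides with the Besov space $W^{\sigma,p}(Q)=B^\sigma_{p,p}(Q)$ whenever $\sigma\notin\N$, while for integer smoothness one has instead $W^{m,p}(Q)=F^m_{p,2}(Q)$. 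With these identifications the last assertion of the lemma is immediate: if $m\in\N$, then $H^m_p(Q)=F^m_{p,2}(Q)=W^{m,p}(Q)$, which is precisely~\eqref{eqmp}.

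For the embeddings I would distinguish the integer and the non-integer case. When $\sigma=m\in\N$ the middle space is again $F^m_{p,2}(Q)$, so the whole chain reduces to the elementary monotonicity of the Triebel--Lizorkin scale in the smoothness index, namely $F^{m+\eps}_{p,2}(Q)\hookrightarrow F^m_{p,2}(Q)\hookrightarrow F^{m-\eps}_{p,2}(Q)$, which holds because a strict increase of the differentiability exponent yields a continuous embedding regardless of the fine index. When $\sigma\notin\N$ the point is to trade the mismatch between the fine indices ($2$ for the Bessel scale, $p$ for the Sobolev--Slobodeckij scale) against the $\eps$ of extra smoothness. Concretely, I would invoke three standard facts from~\cite{trbook}: first, $B^s_{p,\min(p,2)}(Q)\hookrightarrow F^s_{p,2}(Q)\hookrightarrow B^s_{p,\max(p,2)}(Q)$; second, the monotonicity $B^s_{p,q_0}(Q)\hookrightarrow B^s_{p,q_1}(Q)$ for $q_0\le q_1$; and third, the embedding with loss of regularity $B^{s_0}_{p,q_0}(Q)\hookrightarrow B^{s_1}_{p,q_1}(Q)$, valid for every $q_0,q_1\in[1,\infty]$ as soon as $s_0>s_1$.

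Chaining these, the left inclusion would follow from
\[ H^{\sigma+\eps}_p(Q)=F^{\sigma+\eps}_{p,2}(Q)\hookrightarrow B^{\sigma+\eps}_{p,\max(p,2)}(Q)\hookrightarrow B^{\sigma}_{p,p}(Q)=W^{\sigma,p}(Q), \]
where the last step uses the loss-of-regularity embedding with $\sigma+\eps>\sigma$. Symmetrically, the right inclusion would follow from
\[ W^{\sigma,p}(Q)=B^{\sigma}_{p,p}(Q)\hookrightarrow B^{\sigma-\eps}_{p,\min(p,2)}(Q)\hookrightarrow F^{\sigma-\eps}_{p,2}(Q)=H^{\sigma-\eps}_p(Q), \]
using the loss-of-regularity embedding with $\sigma>\sigma-\eps$ followed by the first of the three facts above. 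Each single arrow is continuous, hence so are the two composite embeddings.

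The step I expect to be the main (though purely bookkeeping) obstacle is keeping the integer-versus-non-integer dichotomy straight: the identification $W^{\sigma,p}=B^\sigma_{p,p}$ fails exactly at integer $\sigma$, where $W^{m,p}$ coincides with $F^m_{p,2}$ and not with $B^m_{p,p}$; overlooking this would break the chain at precisely the points where~\eqref{eqmp} is needed. Once the two cases are separated as above, no genuine analytic difficulty remains, since every inclusion used is a named embedding in~\cite{trbook}. As a self-contained alternative, one could bypass the Littlewood--Paley scales and argue by interpolation, writing $H^\sigma_p(Q)$ as a complex and $W^{\sigma,p}(Q)$ as a real interpolation space between $L^p(Q)$ and a high-order classical Sobolev space, and then using that a strict gain in the interpolation parameter dominates the choice of the second index; in that approach the identity~\eqref{eqmp} would instead be obtained from the Marcinkiewicz multiplier theorem on $Q$ applied to the symbols $(ik)^\alpha(1+|k|^2)^{-m/2}$.
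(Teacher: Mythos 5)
Your proof is correct, but it follows a genuinely different route from the paper's. The paper does not argue via Littlewood--Paley theory at all: it simply cites Lions--Magenes (Theorem~3.2 of \emph{Problemi ai limiti non omogenei.~III}) for the case $Q=\R^N$, notes that the extension to bounded smooth domains follows from the fact that $H^\sigma_p(\Omega)$ consists of restrictions of functions in $H^\sigma_p(\R^N)$ (Proposition~4.1 there), and remarks that the same argument, ``even simpler'', handles the periodic case. Your argument instead works directly on the torus through the identifications $H^\sigma_p=F^\sigma_{p,2}$, $W^{\sigma,p}=B^\sigma_{p,p}$ for $\sigma\notin\N$, $W^{m,p}=F^m_{p,2}$ for $m\in\N$, and the three standard $B$/$F$ embeddings; the chains you write are all valid, the integer/non-integer dichotomy is handled correctly (it is indeed the one place where the argument could silently break), and the $\eps$ of extra smoothness is exactly what absorbs the mismatch of fine indices $2$ versus $p$. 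What your approach buys is transparency and self-containedness: the loss $\eps$ is explained mechanically, \eqref{eqmp} falls out of the Littlewood--Paley theorem rather than being part of a cited package, and no extension/restriction argument is needed; what the paper's approach buys is brevity. Two minor bookkeeping points if you were to write this up: the identifications and embeddings should be quoted in their \emph{periodic} versions (e.g.\ the chapters on periodic spaces in Triebel or in Schmeisser--Triebel), since the statements in \cite{trbook} are mostly formulated on $\R^N$; and for $\eps>\sigma$ the space $H^{\sigma-\eps}_p(Q)$ has negative smoothness, which your Fourier-analytic framework handles automatically but which falls outside the definition \eqref{bessel} as literally stated in the paper.
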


\begin{proof} The proof of this result
is given in~\cite[Theorem~3.2]{lm}, for~$Q=\R^N$.
Then in~\cite[Section~4]{lm}, the result is extended to~$
Q=\Omega$ with~$\Omega$ bounded open set with regular
boundary, since it is proved (see~\cite[Proposition~4.1]{lm})
that~$H^\sigma_p(\Omega)$
coincides with the set of restrictions to~$\Omega$
of functions in~$H^\sigma_p(\R^N)$. 

The same argument (even simpler) permits to show
also the result for the periodic case. 
\end{proof}

\begin{lemma}\label{lemmauno}
Let $w\in H^\sigma_p(Q; \R^N)$, with $\sigma\ge 0$. 
Then there exists a unique solution 
$m\in H^{2s-1+\sigma}_p(Q)$ to the problem
\begin{equation}\label{eqm}
(-\Delta)^{s}m = \div(w),\qquad {\mbox{with }}\; \int_Q m \, dx=1.
\end{equation}
Moreover there exists $C>0$, depending on $p$,  such that 
\begin{equation}\label{esw} \|m\|_{H^{2s-1+\sigma}_p(Q)}\leq C \|w\|_{H^{\sigma}_p(Q)}.\end{equation} 
\end{lemma}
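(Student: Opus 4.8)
The plan is to solve~\eqref{eqm} explicitly via the multiple Fourier series on~$Q$ and to read off both solvability and the regularity bound from the symbol of the resulting Fourier multiplier. Writing $w=\sum_{k\in\Z^N} c_k(w)\,e^{ik\cdot x}$ and $m=\sum_{k\in\Z^N} c_k(m)\,e^{ik\cdot x}$, and using that $c_k(\div w)=i\,k\cdot c_k(w)$, the equation $(-\Delta)^s m=\div(w)$ is equivalent, frequency by frequency, to
\[
|k|^{2s}\,c_k(m)=i\,k\cdot c_k(w)\qquad\text{for all }k\in\Z^N.
\]
For $k=0$ the right-hand side vanishes, so this relation is automatically satisfied and leaves $c_0(m)$ free; the constraint $\int_Q m\,dx=1$ then fixes it. For $k\ne 0$ we are forced to take $c_k(m)=i\,k\cdot c_k(w)/|k|^{2s}$, which gives uniqueness. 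The a-priori estimate below will simultaneously certify that this formal series indeed defines an element of $H^{2s-1+\sigma}_p(Q)$, thereby completing the existence part.

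For the estimate, set $\tau:=2s-1+\sigma$, which is positive since $s>\frac12$ and $\sigma\ge 0$. By the equivalent norm recalled above, it suffices to bound $\|m\|_{L^p}$ and $\|(-\Delta)^{\tau/2}m\|_{L^p}$. First I would compute the symbol of the top-order term:
\[
|k|^{\tau}\,c_k(m)=|k|^{2s-1+\sigma}\,\frac{i\,k\cdot c_k(w)}{|k|^{2s}}
=\sum_{j=1}^N \frac{i\,k_j}{|k|}\,|k|^{\sigma}\,c_k(w_j).
\]
Recognizing $|k|^{\sigma}$ as the symbol of $(-\Delta)^{\sigma/2}$ and $ik_j/|k|$ as the symbol of the $j$-th periodic Riesz transform $R_j$, this identity reads, at the operator level,
\[
(-\Delta)^{\tau/2}m=\sum_{j=1}^N R_j\big[(-\Delta)^{\sigma/2}w_j\big].
\]

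The key analytic input is the $L^p(Q)$-boundedness of the Riesz transforms for $1<p<\infty$. Granting it, since $\|(-\Delta)^{\sigma/2}w_j\|_{L^p}\le C\|w\|_{H^\sigma_p}$ by the equivalent form of the Bessel norm, one gets $\|(-\Delta)^{\tau/2}m\|_{L^p}\le C\|w\|_{H^\sigma_p}$. To control $\|m\|_{L^p}$ I would use that $\tau>0$, so that the zero-mean part obeys a Poincar\'e-type inequality $\|m-c_0(m)\|_{L^p}\le C\|(-\Delta)^{\tau/2}m\|_{L^p}$, while $c_0(m)$ is the fixed normalization constant; combining these two contributions yields~\eqref{esw}.

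The main obstacle is precisely the $L^p$-boundedness of the periodic Riesz transforms (equivalently, that the $\Z^N$-multiplier $ik_j/|k|$ defines a bounded operator on $L^p(Q)$). I would obtain it either by verifying the Mikhlin--H\"ormander conditions for this multiplier on $\Z^N$, or, more economically, by de Leeuw's transference theorem, which deduces the periodic boundedness from the classical boundedness of the Riesz transforms on~$\R^N$. Everything else is bookkeeping with Fourier symbols; the only points requiring care are that $k=0$ is excluded throughout the multiplier manipulations (which is harmless, since $\div w$ has zero mean and $\tau>0$ keeps the Poincar\'e step available) and that the prescribed mean of $m$ contributes only a controlled constant.
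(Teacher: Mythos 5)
Your proposal is correct, but it takes a genuinely different route from the paper's. The paper never works at the level of Fourier symbols: it first solves the auxiliary second-order problem $-\Delta u=\div(w)$ with $\int_Q u\,dx=0$, proving $\|u\|_{H^{1+\sigma}_p(Q)}\le C\|w\|_{H^\sigma_p(Q)}$ by duality (testing against $(-\Delta)^{\sigma/2}v$, using the identification $W^{1,p}(Q)=H^1_p(Q)$ from \eqref{eqmp}, and approximating $w$ by smooth fields), and then produces the solution by the lifting $m:=1+(-\Delta)^{1-s}u$, for which $(-\Delta)^s m=-\Delta u=\div(w)$; the bound \eqref{esw} then follows by controlling $\|m\|_{H^{2s-1+\sigma}_p(Q)}$ by $\|u\|_{H^{\sigma+1}_p(Q)}+\|u\|_{H^{\sigma-1+2s}_p(Q)}$ and using $2s-1\in(0,1)$. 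You instead diagonalize the equation frequency by frequency, $c_k(m)=i\,k\cdot c_k(w)/|k|^{2s}$ for $k\ne 0$, read the top-order estimate off the factorization $(-\Delta)^{\tau/2}m=\sum_j R_j\bigl[(-\Delta)^{\sigma/2}w_j\bigr]$ with $\tau=2s-1+\sigma$, and invoke $L^p(Q)$-boundedness of the periodic Riesz transforms (de Leeuw transference or Mikhlin on $\Z^N$), with a fractional Poincar\'e inequality handling $\|m\|_{L^p(Q)}$. The harmonic-analytic content is ultimately the same --- the Riesz transform bound you isolate is also what powers the paper's duality step, where it is hidden in the equivalence $W^{1,p}\simeq H^1_p$ and in the claim that $\psi=(-\Delta)^{1/2}v$ ranges over all zero-average test functions --- but your version makes the multiplier theory explicit, gives existence and uniqueness in one stroke, and makes the role of the normalization $c_0(m)=1$ transparent, whereas the paper's reduction to the standard Laplacian lets it quote classical Bessel-space facts without discussing multipliers on the torus. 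Two points you should spell out to make the sketch fully rigorous: for existence you need the multiplier $ik_j/|k|^{2s}$ ($k\ne 0$) to be an $L^p(Q)$ multiplier so that the series actually defines $m\in L^p(Q)$ (it is, being the composition of $R_j$ with the Riesz potential of positive order $2s-1$, whose periodic kernel is integrable; alternatively, run the a-priori bound on trigonometric-polynomial approximations of $w$ and pass to the limit), and the Poincar\'e step uses boundedness of $(-\Delta)^{-\tau/2}$ on mean-zero $L^p(Q)$, which is standard on the torus for $\tau>0$ but deserves a citation or a one-line kernel argument.
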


\begin{proof}
We first show that the following auxiliary problem
\begin{equation}\label{eqmu}
-\Delta u = \div(w), \qquad {\mbox{with }}\; \int_Q u\, dx=0,
\end{equation}
admits a unique solution $u\in H^{1+\sigma}_p(Q)$.

Assume first that $w$ is smooth, let $u$ be the unique smooth solution to \eqref{eqmu},
and let $v\in C^\infty(Q)$ be a test function. Multiplying \eqref{eqmu}
by $(-\Delta)^\frac{\sigma}{2}v$ and integrating by parts, we get
\begin{eqnarray*}
&& \int_Q u\, (-\Delta)^{1+\frac\sigma 2}v\,dx =
 - \int_Q (-\Delta)^\frac{\sigma}{2}w \cdot\nabla v\,dx
\\&&\qquad\quad \le \|w\|_{H^\sigma_p(Q)}\,\|\nabla v\|_{L^{p'}(Q)}\le 
C  \|w\|_{H^\sigma_p(Q)}\,\|(-\Delta)^\frac{1}{2} v\|_{L^{p'}(Q)},
\end{eqnarray*} 
where the last inequality follows from \eqref{eqmp} with $m=1$.
Here above~$p'=\frac{p}{p-1}$ is the conjugate exponent of~$p$.

As a consequence, by taking $\psi := (-\Delta)^\frac{1}{2} v$, 
which is an arbitrary test function with zero average, we get 
\[
\int_Q u\, (-\Delta)^{\frac{1+\sigma}{2}}\psi\,dx \le 
C  \|w\|_{H^\sigma_p(Q)}\,\|\psi\|_{L^{p'}(Q)}, \qquad\forall\psi\in C^\infty(Q),
\]
which implies that \begin{equation}\label{es1} \|u\|_{H^{1+\sigma}_p(Q)}\le C  \|w\|_{H^\sigma_p(Q)}.\end{equation}
The result in the general case then follows by approximating $w$ with smooth vector fields.

\smallskip

Letting now $m := 1+ (-\Delta)^{1-s}u\in H^{2s-1+\sigma}_p(Q)$, 
so that $(-\Delta)^s m = -\Delta u$,
we have that 
$$\int_Q m\, dx = 1$$ and $m$ is the (unique) solution to \eqref{eqm}. Finally, recalling the definition of $m$, we get that
\begin{equation}\begin{split}\label{jegberbger} 
&\|m\|_{H^{2s-1+\sigma}_p(Q)}=\| (I-\Delta)^{s-\frac{1}{2}+
\frac{\sigma}{2}}m\|_{L^p(Q)} \\ &\qquad \leq \| 
(I-\Delta)^{ \frac{1-\sigma}{2}}u\|_{L^p(Q)} 
+ \|(I-\Delta)^{s-\frac{1}{2}+\frac{\sigma}{2}}u\|_{L^p(Q)} 
=\|u\|_{H_p^{ \sigma+1}(Q)}
+ \|u\|_{H_p^{\sigma-1+2s}(Q)} .\end{split}\end{equation}
Notice now that~$2s-1\in (0,1)$,
therefore~$\sigma-1+2s\leq \sigma +1$,
Hence, \eqref{jegberbger}, together with~\eqref{es1}, gives~\eqref{esw},
thanks to Lemma~\ref{lemmazero}. 
\end{proof}

\begin{lemma}\label{lemmadue}
Let~$r>1$ and~$m\in L^1(Q)$ be such that $\int_Q m =1$ and
\begin{equation}\label{senzaw}
\int_Q m (-\Delta)^s \phi \le C \|\nabla\phi\|_{L^{r'}(Q)}, 
\qquad \forall \phi\in C^1_{\rm per}(\R^n),
\end{equation}
with~$r'=\frac{r}{r-1}$, for some $C>0$.
Then $(-\Delta)^{s-\frac{1}{2}}m\in L^r(Q)$ and 
\begin{equation}\label{eqstima}
\|(-\Delta)^{s-\frac{1}{2}}m\|_{L^r(Q)}\le C.
\end{equation}
\end{lemma}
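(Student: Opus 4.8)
The plan is to argue by duality, using that $(L^{r'}(Q))^\ast = L^r(Q)$ since $1<r<\infty$, and to realize the unknown norm $\|(-\Delta)^{s-\frac12}m\|_{L^r(Q)}$ as the operator norm of the linear functional $\psi \mapsto \int_Q m\,(-\Delta)^{s-\frac12}\psi$ acting on smooth test functions $\psi$. The key observation is the operator identity $(-\Delta)^{s-\frac12}\psi = (-\Delta)^s\big((-\Delta)^{-\frac12}\psi\big)$, which lets me feed $\phi := (-\Delta)^{-\frac12}\psi$ into the hypothesis~\eqref{senzaw}; the gradient appearing on the right-hand side of~\eqref{senzaw} is then controlled by $\psi$ through the boundedness of the periodic Riesz transform on $L^{r'}(Q)$.

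In detail, I first note that $(-\Delta)^{s-\frac12}\psi$ has zero average for every smooth $\psi$ (the constant Fourier mode is annihilated), so the functional depends only on $\psi_0:=\psi-\int_Q\psi$, and I may restrict to smooth $\psi$ with $\int_Q\psi=0$. For such $\psi$ the function $\phi:=(-\Delta)^{-\frac12}\psi$ is well defined and smooth on $Q$ (on the Fourier side one divides the coefficients by $|k|$, with no issue at $k=0$), hence $\phi\in C^1_{\rm per}(\R^N)$ and $(-\Delta)^s\phi=(-\Delta)^{s-\frac12}\psi$. Using the self-adjointness of the fractional Laplacian on the torus, the hypothesis~\eqref{senzaw} applied to $\phi$ and to $-\phi$, together with $\nabla\phi=\nabla(-\Delta)^{-\frac12}\psi$, yields
\[
\Big|\int_Q m\,(-\Delta)^{s-\frac12}\psi\Big|
=\Big|\int_Q m\,(-\Delta)^{s}\phi\Big|
\le C\,\big\|\nabla(-\Delta)^{-\frac12}\psi\big\|_{L^{r'}(Q)}.
\]
The vector operator $\nabla(-\Delta)^{-\frac12}$ is exactly the periodic Riesz transform, a Calder\'on--Zygmund operator whose Fourier multiplier $ik/|k|$ satisfies the Mikhlin--H\"ormander condition; it is therefore bounded on $L^{r'}(Q)$, so that $\|\nabla(-\Delta)^{-\frac12}\psi\|_{L^{r'}(Q)}\le C_{r,N}\,\|\psi\|_{L^{r'}(Q)}$.

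Combining these estimates gives $\big|\int_Q m\,(-\Delta)^{s-\frac12}\psi\big|\le C\,\|\psi\|_{L^{r'}(Q)}$ for every test $\psi$, where I have absorbed the Riesz norm and the (bounded, by a factor $2$ since $|Q|=1$) projection onto zero-average functions into the constant. Since $(-\Delta)^{s-\frac12}\psi$ is smooth and $m\in L^1(Q)$, the left-hand side is precisely the pairing of the distribution $(-\Delta)^{s-\frac12}m$ against $\psi$, so $(-\Delta)^{s-\frac12}m$ extends to a bounded linear functional on $L^{r'}(Q)$; by the Riesz representation theorem it is represented by a function in $L^r(Q)$ with norm controlled by the constant, which is~\eqref{eqstima}. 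I expect the only genuinely substantive ingredient to be the boundedness of the periodic Riesz transform on $L^{r'}(Q)$, that is the Calder\'on--Zygmund multiplier estimate; the remaining steps are the duality set-up and the routine bookkeeping of the zero-average reduction and the constants.
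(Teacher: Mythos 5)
Your proof is correct and follows essentially the same route as the paper: the paper also argues by duality with the substitution $\psi=(-\Delta)^{\frac12}\phi$, invoking the isomorphism $W^{1,r'}(Q)\cong H^1_{r'}(Q)$, which is exactly your periodic Riesz transform bound $\|\nabla\phi\|_{L^{r'}(Q)}\le C\|(-\Delta)^{\frac12}\phi\|_{L^{r'}(Q)}$. The only cosmetic difference is that the paper first mollifies, setting $m_\eps:=m\star\chi_\eps$, to justify the integration by parts pointwise and then lets $\eps\to0$, whereas you work directly at the level of the distributional pairing and handle the zero-average reduction explicitly; both are routine ways of tying up the same argument.
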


\begin{proof}
By \cite{trbook} (recall also~\eqref{eqmp}), 
we know that $W^{1,r'}(Q)$ is isomorphic to $H^1_{r'}(Q)$,
so that in particular there exists a constant $C=C_{r'}>0$ such that 
\begin{equation}\label{jgjbgbg}
\|\nabla \phi\|_{L^{r'}(Q)}\leq C\|(-\Delta)^{\frac{1}{2}}\phi\|_{L^{r'}(Q)}.
\end{equation}

Let $m_\eps:=m\star\chi_\eps$, where $\chi_\eps$ is a standard  mollifier.
Then \eqref{senzaw} reads 
 \[
\int_Q m_\eps (-\Delta)^s \phi \le C \|\nabla\phi\|_{L^{r'}(Q)}, \qquad \forall \phi\in C^1_{\rm per}(\mathbb R^n).
\]
Therefore, integrating by parts and recalling~\eqref{jgjbgbg}, we obtain 
\[
\int_Q (-\Delta)^{s-\frac{1}{2}}m_\eps (-\Delta)^{\frac{1}{2}} \phi\, dx=
\int_Q m_\eps (-\Delta)^s \phi\, dx
 \le C \|\nabla\phi\|_{L^{r'}(Q)}\leq 
 C\| (-\Delta)^{\frac{1}{2}} \phi\|_{L^{r'}(Q)}, 
 \] 
 from which we obtain the desired inequality~\eqref{eqstima} for $m_\eps$
and then for $m$, letting $\eps \to 0$. 
\end{proof}

Finally we consider steady state solutions to the periodic fractional 
Fokker-Planck equation.

\begin{proposition}\label{lemmaunoemezzo}
Let $b\in L^\infty(Q;\R^N)$. Then, there exists a unique solution 
$m\in H^{2s-1}_p(Q)$, for all~$p > 1$, to the problem
\begin{equation}\label{eqkolmo}
(-\Delta)^{s}m + \div(bm) = 0,
\end{equation}
with $\int_Q m \, dx=1$, and
\[
\|m\|_{H^{2s-1}_p(Q)} \le C,
\]
where $C>0$ depends only on $N$, $p$ and $\|b\|_{L^\infty(Q;\R^N)}$.
In particular, we have that~$m\in C^\theta(Q)$,
for every~$\theta\in (0, 2s-1)$. 

Furthermore, we get that there exists a constant~$
C=C(s,N,b)>0$ such that $$0<C\leq m(x)\leq C^{-1}, \quad
{\mbox{ for any }}x\in Q.$$  
\end{proposition}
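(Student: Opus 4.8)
The plan is to recast \eqref{eqkolmo} as $(-\Delta)^s m=\div(w)$ with $w:=-bm$, so that the previous lemmas apply, and to run an approximation scheme in the drift. First I would replace $b$ by smooth fields $b_\eps$ with $\|b_\eps\|_{L^\infty(Q;\R^N)}\le\|b\|_{L^\infty(Q;\R^N)}$ and $b_\eps\to b$ almost everywhere. For a smooth drift the operator $L_\eps m:=(-\Delta)^s m+\div(b_\eps m)$ is linear with smooth coefficients, and its formal adjoint $L_\eps^\ast\phi=(-\Delta)^s\phi-b_\eps\cdot\nabla\phi$ annihilates the constants; by the strong maximum principle the constants are the only elements of $\ker L_\eps^\ast$. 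The Fredholm alternative then forces $\dim\ker L_\eps=\dim\ker L_\eps^\ast=1$, and the positivity-preserving property of the semigroup generated by $L_\eps$ lets me choose a strictly positive generator, normalized so that a smooth $m_\eps>0$ with $\int_Q m_\eps\,dx=1$ solves $L_\eps m_\eps=0$.

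The heart of the matter is a two-sided bound $0<c\le m_\eps\le c^{-1}$ that is uniform in $\eps$, with $c$ depending only on $s$, $N$ and $\|b\|_{L^\infty}$. Here the restriction $s>\frac12$ is decisive: the transport term $\div(b_\eps m_\eps)$ is then of lower order than the diffusion $(-\Delta)^s$, so $m_\eps$ is the invariant density of the process $dX_t=b_\eps(X_t)\,dt+dZ_t$ whose transition kernel, by the estimates of \cite{bj}, is comparable from above and below to the free $2s$-stable kernel. The invariant density inherits these two-sided bounds; equivalently, I would derive the upper and lower bounds from a fractional weak Harnack inequality applied to the equation satisfied by $m_\eps$. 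Passing to the limit $\eps\to0$ along these uniform bounds, together with the compactness established below, yields a solution $m$ of \eqref{eqkolmo} with $\int_Q m\,dx=1$ and $0<C\le m\le C^{-1}$.

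Once $m\in L^\infty(Q)$ is available, the Bessel regularity is immediate from Lemma~\ref{lemmadue}. Testing \eqref{eqkolmo} against $\phi$ and using $b\in L^\infty$ gives, for every $p>1$,
\[\int_Q m\,(-\Delta)^s\phi=\int_Q bm\cdot\nabla\phi\le\|b\|_{L^\infty}\,\|m\|_{L^{p}(Q)}\,\|\nabla\phi\|_{L^{p'}(Q)},\]
so Lemma~\ref{lemmadue} yields $\|(-\Delta)^{s-\frac12}m\|_{L^p}\le\|b\|_{L^\infty}\|m\|_{L^p}$, and hence $\|m\|_{H^{2s-1}_p(Q)}\le C(N,p,\|b\|_{L^\infty})$, because the $H^{2s-1}_p$ norm is equivalent to $\|m\|_{L^p}+\|(-\Delta)^{s-\frac12}m\|_{L^p}$ and $\|m\|_{L^p}\le\|m\|_{L^\infty}$. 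Taking $p$ large, the embedding $H^{2s-1}_p(Q)\hookrightarrow C^\theta(Q)$ for $\theta<2s-1-\frac Np$ (a consequence of Lemma~\ref{lemmazero} and the Sobolev embedding) gives $m\in C^\theta(Q)$ for every $\theta\in(0,2s-1)$, and also supplies the compactness used above to pass to the limit. For uniqueness I would use that, by the strictly positive kernel of \cite{bj}, the associated semigroup is irreducible and strong Feller, so its invariant probability measure is unique; alternatively, $\dim\ker L=1$ by Fredholm theory as above, and if $m_1,m_2$ both solve \eqref{eqkolmo} with unit mass then $m_1-m_2\in\ker L$ has zero average, forcing $m_1=m_2$.

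The main obstacle is the uniform two-sided bound of the second paragraph. If one wishes to avoid the kernel estimates of \cite{bj}, the upper bound can instead be approached by the self-improving integrability built into Lemma~\ref{lemmadue}: whenever $m\in L^r$ with $r>1$ one gets $\|m\|_{H^{2s-1}_r}\le C(1+\|b\|_{L^\infty})\|m\|_{L^r}$, and the Sobolev embedding $H^{2s-1}_r(Q)\hookrightarrow L^{r^\ast}(Q)$ with $\frac{1}{r^\ast}=\frac1r-\frac{2s-1}N$ raises the exponent by the fixed amount $\frac{2s-1}N>0$ at each step, reaching every $L^p$ in finitely many iterations. The delicate point in this alternative is precisely the start of the iteration, since Lemma~\ref{lemmadue} is unavailable at $r=1$, so passing from the normalization $\int_Q m=1$ to a first exponent $r_0>1$, as well as the strictly positive lower bound $m\ge c>0$, is exactly where the subcriticality $s>\frac12$ must be used.
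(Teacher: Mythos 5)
Your proposal is correct and shares the paper's skeleton: smooth the drift, obtain existence and uniqueness up to normalization via the Fredholm alternative together with the strong maximum principle (constants are the only solutions of the adjoint equation \eqref{adjo}), extract the two-sided bounds from the kernel comparability of \cite{bj}, and then get the $H^{2s-1}_p$ bound from the Bessel-space lemmas. The one genuine divergence is in how the bound $0<c\le m\le c^{-1}$ is actually extracted from \cite{bj}. The paper does not merely assert that the invariant density inherits the kernel's bounds: it pairs $m$ against solutions $z_n$ of the Cauchy problem \eqref{cauchypb} with mollified Dirac data $z_{0,n}\rightharpoonup\delta_{x_0}$, uses the conservation identity $\int_Q z_n(\cdot,t)\,m\,dx=\int_Q z_{0,n}\,m\,dx$, and invokes the large-time convergence result of \cite{bcci} to identify the limit as a constant $\bar z_n\ge \tilde C$, whence $m(x_0)\ge\tilde C$ (and the upper bound symmetrically). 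Your sentence ``the invariant density inherits these two-sided bounds'' is true, but its justification is precisely this duality computation — invariance of $m$ under the adjoint semigroup is itself a consequence of the stationarity of \eqref{eqkolmo} via the same pairing — so it should be spelled out rather than asserted; the compensating advantage of your formulation is that evaluating the invariance identity at the fixed time $t=1$, where the periodized kernel is bounded above and below, would let you bypass the asymptotic result of \cite{bcci} altogether, a small genuine simplification. Two further differences are cosmetic: for the Bessel estimate you use Lemma~\ref{lemmadue} plus the norm equivalence from \cite{kim}, where the paper applies Lemma~\ref{lemmauno} with $w=-bm\in L^\infty(Q)$, both being one line once $m\in L^\infty(Q)$ is known; and your normalization of the one-dimensional kernel of $L_\eps$ tacitly assumes its spanning element has nonzero average, which (as in the paper) is only justified a posteriori by the positivity argument. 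Your alternative Moser-type iteration for the upper bound is honestly flagged as failing at the starting exponent $r=1$ and is not needed, since the kernel route already gives $m\in L^\infty(Q)$.
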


\begin{proof} 
Assume $b$ to be smooth, the general case will follow by an approximation argument.
\medskip

\noindent   {\bf Step 1: Existence and uniqueness of a solution.} 
The existence result follows by the Fredholm alternative.

More precisely, for $\Lambda$ large enough, by Lax-Milgram Theorem, the equation
\[
(-\Delta)^{s} v - b \cdot \nabla v + \Lambda v = \psi
\]
has a unique solution $u \in H^{s}_2(Q)$,
for any fixed $\psi \in L^2(Q)$.
Therefore, the mapping~$\mathcal{G}_\Lambda$,
defined by~$v = \mathcal{G}_\Lambda \psi$, 
is a compact mapping of~$L^2(Q)$ into itself.

Now, equation \eqref{eqkolmo} may be rewritten as
\begin{equation}\label{Gstar}
(I - \Lambda \mathcal{G}_\Lambda^*) m = 0.
\end{equation}
By the Fredholm alternative, the number of linearly independent solutions of \eqref{Gstar} is the same as that of the adjoint problem, that is
\[
(I - \Lambda \mathcal{G}_\Lambda) v = 0,
\]
that corresponds to
\begin{equation}\label{adjo}
(-\Delta)^{s} v - b \cdot \nabla v  = 0.
\end{equation}
Any $v\in  H^{s}_2(Q)$ solving \eqref{adjo} is in $C^{2s}(Q)$
(due to~\cite[Lemma~2.2]{pp}), and
then it must be constant by the Strong Maximum Principle (see~\cite{cio}).
We conclude that there exists~$m$
solving~\eqref{eqkolmo} (in the distributional sense),
and such~$m \in L^2(Q)$ is unique up to a multiplicative constant. 
\medskip

\noindent   {\bf Step 2: Positivity.} 
Fix a nonnegative periodic  Borel initial datum  $z_0$, and consider the following Cauchy problem, 
\begin{equation}\label{cauchypb}
\begin{cases}
\partial_t z+ (-\Delta)^{s} z - b \cdot \nabla z = 0 & \text{in $\R^N \times (0,\infty)$}, \\
z(\cdot, 0) = z_0(\cdot).
\end{cases}
\end{equation} 
We recall estimates of heat kernel of fractional Laplacian perturbed by gradient operators obtained
in~\cite[Theorem~2]{bj}: for every~$t_0>0$,
there exists a constant~$C>0$,
depending on $t_0$, $s$, $b$ and~$N$, 
such that 
\begin{equation}\label{bogdan} Cp(t,x,y)\leq p'(t,x,y)\leq
C^{-1} p(t,x,y),\quad {\mbox{for any }} x,y\in \R^N
{\mbox{ and }} t\in (0, t_0),\end{equation}
where $p(t,x,y)$ is the fractional heat kernel and~$p'(t,x,y)$
is the kernel associated to the operator~$\partial_t  + (-\Delta)^{s}
- b \cdot \nabla $.

Now we fix $x_0\in Q$ 
and we take a mollifying sequence
\begin{equation}\label{convmes} z_{0,n}\rightharpoonup \delta_{x_0}
\end{equation} in the sense of measure. 
Let $z_n$ be the solution to \eqref{cauchypb} with intial datum $z_{0,n}$. 

So, the solution $z_n$ of \eqref{cauchypb} satisfies
\begin{equation*}  z_n(x,1)\geq \tilde C  \int_Q z_{0,n}(x)dx=\tilde C,
\end{equation*} 
where $\tilde C>0$ is a constant depending on $b$, $s$ and~$N$.  
In particular, by the comparison principle,
\[z_n(x,t)\geq  \tilde C \quad  {\mbox{ for any }}  t\geq 1.\]
By \cite[Theorem 2]{bcci}, $z_n(\cdot, t) - \Lambda_n t 
- \bar{z}_n(\cdot)$ converges uniformly to zero as $t \to +\infty$, 
where the couple~$(\Lambda_n, \bar{z}_n)$
solves the stationary problem
\begin{equation}\label{ergod}
(-\Delta)^{s} \bar z_n - b \cdot \nabla \bar z_n = \Lambda_n \quad \text{in $Q$}.
\end{equation}
Note that $(\Lambda_n, \bar{z}_n)$ solving \eqref{ergod}
must satisfy $\Lambda_n = 0$, so that~$\bar{z}_n$ is
identically constant on~$Q$; hence~$z_n(\cdot, t) \to \bar{z}_n\geq
\tilde C  $ uniformly on~$Q$ as~$t \to +\infty$. 

By multiplying the equation in \eqref{cauchypb} by $m$, the equation
in~\eqref{eqkolmo} by $z_n$, and integrating by parts on~$Q$,
we obtain that, for all $t > 1$,
\[
\int_Q \partial_t z_n(x,t) m(x) dx = 0,
\]
so
\begin{equation}\label{convmes2} 
 \int_Q z_{0,n}(x) m(x) dx = \int_Q z_n(x,t) m(x) dx \to \bar{z}_n\geq \tilde C >0
\end{equation} 
as $t \to +\infty$, since~$\int_Q m \, dx=1$.
Now we send $n\to +\infty$ in~\eqref{convmes2} and we get,
recalling \eqref{convmes},
\[m(x_0)\geq \tilde C.\]
Since this is true for every $x_0\in Q$,
we get that there exists a constant~$C=C(s,N,b)>0$
such that~$0<C\leq m(x) $.  
\medskip

\noindent   {\bf Step 3: Boundedness and regularity.}
The same argument as in Step~2, using the bound from above
in~\eqref{bogdan} (instead of the bound from below), 
gives that there exists a constant~$C'=C'(s,N,b)>0$ 
such that $ m(x)\leq C'$.  

Since $b m \in L^\infty(Q)$, by Lemma \ref{lemmauno} we have
that~$m \in H^{2s-1}_p(Q)$, for all $p > 1$. 
In particular, we have that~$m\in C^\theta (Q)$, for every~$\theta\in (0,2s-1)$.  
\medskip
\end{proof}

\subsection{The case $s<\frac{1}{2}$} \label{submezzo}
Note that in the case $s<\frac{1}{2}$  the solutions
to 
$$(-\Delta)^s m=\div w,$$
for $w\in H^{\sigma}_p$, have to be intended in some
weak sense. In particular, if~$\sigma<1-2s$,
the solution $m$ is a distribution. 

Moreover the associated kernel of the operator $(-\Delta)^s +b(x)\cdot \nabla$   is not bounded from below 
by the  fractional heat kernel, and  it does not produce strictly 
positive solutions. These phenomena will be discussed in details in the following remarks.  
This suggests that the study of fractional Mean Field Games in the range $s<\frac{1}{2}$ 
presents structural differences with respect to the range $s>\frac{1}{2}$. 
\begin{remark}\label{RK1}
Concerning the optimality of
the regularity results in Proposition~\ref{lemmaunoemezzo},
we point out that
the solution~$m$ may vanish at a point
and is not better than~$C^{2s}$ for~$s\in(0,\,1/2)$.
To see a one-dimensional example in~$\R$,
we take~$n=1$, $s\in(0,\,1/2)$ and
$$ b(x):=-\frac{1}{m(x)} \int_0^x (-\Delta)^s m(y)\,dy,$$
with~$m(x):=|x|^\theta$, with~$\theta\in(2s,1)$.

Using the substitution~$z=y/x$, we see that
\begin{equation}\label{SCAL} \int_{\R} \frac{|x+y|^\theta+|x-y|^\theta-2|x|^\theta}{|y|^{1+2s}}\,dy=
|x|^{\theta-2s}
\int_{\R} \frac{|1+z|^\theta+|1-z|^\theta-2|z|^\theta}{|z|^{1+2s}}\,dz\end{equation}
and so~$(-\Delta)^s m(x)=-c|x|^{\theta-2s}$, for some~$c>0$.

This setting gives that, for small~$|x|$,
$$ |b(x)|\le \frac{C}{|x|^\theta} \int_0^{|x|} |y|^{\theta-2s}\,dy
\le C |x|^{1-2s},$$
up to renaming~$C>0$.
This gives that~$b$ is locally bounded (and H\"older continuous
with exponent~$1-2s$).
Moreover, we have that
$$ {\rm div}(bm)= (bm)' =-\left( 
\int_0^x (-\Delta)^s m(y)\,dy\right)'=
-(-\Delta)^s m,$$
hence the equation is satisfied.
\end{remark}

\begin{remark}
The example of Remark~\ref{RK1} can also be used to
show that the positivity results of~\cite{bj} do not hold in general for~$s\in(0,\,1/2)$.
For instance,
we take~$n=1$, $s\in(0,\,1/2)$,
$v(x):=|x|^\theta$, with~$\theta\in(2s,1)$.

{F}rom~\eqref{SCAL}, we know that~$(-\Delta)^s v(x)=-c|x|^{\theta-2s}$, for some~$c>0$.
So we define~$ b(x):= -\frac{c}\theta\,|x|^{-2s}x$
and we notice that~$b$ is locally bounded (and H\"older continuous
with exponent~$1-2s$) and
$$ (-\Delta)^{s} v - b \cdot \nabla v  = 
-c|x|^{\theta-2s} +\left(\frac{c}\theta\,|x|^{-2s}x\right)\cdot(\theta |x|^{\theta-2}x)
=
0,$$
hence~\eqref{adjo} is satisfied.

Since~$v\ge0$ but~$v(0)=0$, this example shows that the strong maximum principle is
violated in this case. Note that~$v$ solves a.e. the
equation, but it is not a viscosity (sub)solution
of the equation at~$x=0$. Indeed, 
by the strong maximum principle proved
in Lemma~4.4 in~\cite{blt}, the unique viscosity solutions
to~$(-\Delta)^{s} v - b \cdot \nabla v$ are constants. 

Moreover, $v$ is also a (stationary) solution of the heat flow
associated to~\eqref{adjo}, corresponding to an initial datum which is nonnegative
and that does not become strictly positive as time flows (this lack of positivity gain in time
can be seen as a counterpart when~$s\in(0,\,1/2)$ to the positivity of the heat kernel
established in~\cite{bj}).
\end{remark}

\section{Fractional Hamilton-Jacobi equations with coercive Hamiltonian}\label{sectionHJ} 

We collect some results on the
Lipschitz continuity of viscosity solutions to Hamilton-Jacobi equations and on the solution to the ergodic problem. 
There are different kinds of results, depending on the fact that
the Hamiltonian term is dominant or not with respect to the fractional Laplacian term. 
Actually, the growth~$\gamma\leq 2s$ allows the use of the so called  Ishii-Lions method, in particular getting a-priori  estimates on the gradient of the solution
which depend only on the~$L^\infty$ norm of the 
potential term, whereas in the case~$\gamma>2s$
the Bernstein method is used, obtaining a-priori
estimates on the gradient of the solution
which depend only on the Lipschitz norm of the 
potential term.

We consider the following Hamilton-Jacobi equation 
\begin{equation}\label{eqHJ} 
(-\Delta)^s u+ H(\nabla u)+\lambda= f(x), \qquad x\in \R^N.
\end{equation} 

We assume that $f\in C(\R^N)$,  and that~$f$ is $\Z^N$-periodic.  

\begin{theorem}\label{ergodic} 
Let $s>\frac{1}{2}$ and $\gamma\leq 2s$. 

Then the following statements hold. 
\begin{enumerate} 
\item If $u$ is a continuous periodic solution to \eqref{eqHJ}, then
there exists a constant $K>0$, depending on~$\|u\|_{L^\infty(Q)}$,  $\|f\|_{L^\infty(Q)}$ 
and~$|\lambda|$, such that
\[\|\nabla u\|_{L^\infty(Q)}\leq K.\]
Moreover, there exists a constant $C>0$,
depending only on the period of $u$ and 
$\|f\|_{L^\infty(Q)}$, such that $\|u\|_{L^\infty(Q)}\leq C$.

\item 
There exists a unique constant $\lambda\in\R$
such that~\eqref{eqHJ} has a periodic solution $u\in W^{1, \infty}(\R^N)$ and 
\begin{equation}\label{defbwb}
\lambda= \sup \{c\in\R\; {\mbox{ s.t. }}\; \exists u\in W^{1, \infty}(\R^N) 
{\mbox{ s.t. }} (-\Delta^s) u+ H(\nabla u)+ c\leq f(x)\}.\end{equation} 
Moreover, $u$ is the unique Lipschitz viscosity solution to \eqref{eqHJ}
up to addition of constants. 
\end{enumerate} 

Finally, if $f\in C^{\theta}(\R^N)$, for some $\theta\in (0,1]$, 
then~$u\in C^{2s+\alpha}(Q)\cap H^{2s}_p(Q)$,
for every $\alpha<\theta$ and every~$p>1$. 
\end{theorem}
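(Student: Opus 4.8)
I would treat the three assertions separately: the a priori bounds in~(1) by viscosity arguments, the solvability~(2) by a vanishing--discount approximation resting on those bounds, and the final regularity by a bootstrap for the fractional Laplacian.

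\emph{Gradient bound.} For the Lipschitz estimate I would adapt the Ishii--Lions method of~\cite{bcci1}. As $Q$ is compact, no localization in the space variable is needed, so I double variables and examine
\[
\Phi(x,y):=u(x)-u(y)-L\,\omega(|x-y|),
\]
with $L>0$ to be fixed and $\omega$ smooth, increasing and strictly concave near the origin, $\omega(0)=0$ (for instance $\omega(r)=r-\kappa r^{1+\beta}$ on a small interval). If $\sup_{Q\times Q}\Phi>0$ it is attained at some $(\bar x,\bar y)$ with $\bar x\neq\bar y$, and since $u$ is bounded the constraint $L\omega(|\bar x-\bar y|)<\mathrm{osc}(u)$ forces $|\bar x-\bar y|$ to be small when $L$ is large. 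I then write the viscosity inequalities for $u$ (subsolution at $\bar x$ with the test function $u(\bar y)+L\omega(|\cdot-\bar y|)$, supersolution at $\bar y$ with $u(\bar x)-L\omega(|\bar x-\cdot|)$), splitting $(-\Delta)^s$ into its singular part on a small ball and its tail. The tails are controlled using $\mathrm{osc}(u)$ together with the two--point inequality $u(\bar x+z)-u(\bar x)\le u(\bar y+z)-u(\bar y)$, which follows from the maximality of $(\bar x,\bar y)$ because the penalization is invariant under the common translation $z$; the singular parts, via the strict concavity of $\omega$, produce a diffusive gain that in terms of $L$ scales like $L^{2s}$, the exponent reflecting $s>\tfrac12$. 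Subtracting the two inequalities, this gain must be absorbed by the bounded right--hand side and by the Hamiltonian contribution, which scales like $L^{\gamma}$; the hypothesis $\gamma\le 2s$ is exactly what lets the diffusion dominate, yielding a bound on $L$ in terms of $\|u\|_{L^\infty(Q)}$, $\|f\|_{L^\infty(Q)}$ and $|\lambda|$, i.e.\ the claimed bound on $\|\nabla u\|_{L^\infty(Q)}$.

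\emph{Oscillation bound and ergodic problem.} The bound on $\|u\|_{L^\infty(Q)}$ --- meaningful, after normalizing $\min_Q u=0$, as a bound on $\mathrm{osc}(u)$ --- I would obtain from the coercivity~\eqref{Hass}: testing~\eqref{eqHJ} at an interior maximum and minimum of $u$, where $\nabla u=0$ and $(-\Delta)^s u$ has the favourable sign, first gives $|\lambda|\le\|f\|_{L^\infty(Q)}+|H(0)|$, and coercivity then controls $\mathrm{osc}(u)$ by the period and $\|f\|_{L^\infty(Q)}$ alone, independently of the additive constant (as in~\cite{bcci,blt}); reinserting this makes the Lipschitz constant depend only on the data. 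For~(2) I would run the vanishing--discount scheme: solve $(-\Delta)^s u_\delta+H(\nabla u_\delta)+\delta u_\delta=f$ (well posed by Perron's method and comparison, the zeroth--order term rendering the equation proper), note that the maximum principle bounds $\|\delta u_\delta\|_{L^\infty(Q)}$ and that the estimates above give a uniform Lipschitz bound for $u_\delta-\min_Q u_\delta$; by Ascoli--Arzel\`a, along a subsequence $\delta u_\delta\to-\lambda$ and $u_\delta-\min_Q u_\delta\to u$ uniformly, giving a Lipschitz viscosity solution of~\eqref{eqHJ}. The formula~\eqref{defbwb} and the uniqueness of $\lambda$ follow from comparison (any $c$ admitting a Lipschitz subsolution satisfies $c\le\lambda$, and $\lambda$ is attained), while uniqueness of $u$ up to constants follows from the strong maximum principle for $(-\Delta)^s-b\cdot\nabla$ --- available because $s>\tfrac12$, cf.\ Proposition~\ref{lemmaunoemezzo} and~\cite{cio} --- applied to the difference of two solutions, using the strict convexity of $H$.

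\emph{Regularity.} Once $u\in W^{1,\infty}$ the term $H(\nabla u)$ is bounded, so $u$ solves $(-\Delta)^s u=g$ with $g:=f-\lambda-H(\nabla u)\in L^\infty(Q)$. The $L^p$ theory for the fractional Laplacian, read through the Bessel--potential framework of Section~\ref{sectionfp} and Lemma~\ref{lemmazero}, yields $u\in H^{2s}_p(Q)$ for every $p>1$ and hence, since $2s>1$, $\nabla u\in C^{\beta}(Q)$ for some $\beta>0$. If in addition $f\in C^\theta$, the local Lipschitz continuity of $H$ promotes $g$ to $C^{\theta'}$ for $\theta'<\min(\theta,\beta)$, and the fractional Schauder estimate upgrades $u$ to $C^{2s+\theta'}$; a further iteration closes the bootstrap at $u\in C^{2s+\alpha}(Q)$ for every $\alpha<\theta$. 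Throughout, the genuine difficulty is the gradient estimate: the careful bookkeeping of the singular nonlocal term under doubling, and the verification that its diffusive gain, of order $L^{2s}$, dominates the Hamiltonian, of order $L^{\gamma}$, exactly when $\gamma\le 2s$.
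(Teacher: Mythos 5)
Your proposal is correct and follows essentially the same route as the paper: the paper's proof of Theorem~\ref{ergodic} consists precisely of citations to the Ishii--Lions gradient estimate of~\cite[Theorem~2]{bcci1}, the $L^\infty$/oscillation bound of~\cite[Proposition~1]{bcci1} and~\cite[Lemma~4.2]{blt}, the ergodic approximation and uniqueness results of~\cite[Theorem~1]{bcci} together with the strong maximum principle of~\cite{cio} for~\eqref{defbwb}, and the regularity bootstrap via~\cite{cs}, \cite{bfv} and~\cite{kim}, all of which you reconstruct in outline (variable doubling with a concave modulus and the two-point inequality, normalization of the oscillation via coercivity, vanishing discount, linearization plus strong maximum principle for uniqueness, and the Schauder/$L^p$ iteration closing at $C^{2s+\alpha}$ for $\alpha<\theta$). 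The only difference is one of presentation: the paper outsources each step to the literature, while you sketch the underlying mechanisms, with the same hypotheses ($s>\tfrac12$, $\gamma\le 2s$) entering at the same points.
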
 

\begin{proof} 
The a-priori estimates on the gradient is proved in~\cite[Theorem 2]{bcci1}.
The a-priori estimate on the $L^\infty$ norm of $u$ can be obtained as  in \cite[Proposition1]{bcci1} or \cite[Lemma 4.2]{blt}.

The existence of $\lambda$ and of a 
unique (up to constants) viscosity solution to~\eqref{eqHJ}
is given in~\cite[Theorem~1]{bcci}.
Formula~\eqref{defbwb} can be proved by a standard argument,
using the Strong Maximum Principle, which holds for operators
as~$(-\Delta)^s +b\cdot \nabla$, with~$s>\frac{1}{2}$
and~$b$ continuous (see~\cite{cio}).

Finally, if $f$ is H\"older continuous, applying~\cite{cs},
we get that $u\in C^{1+\alpha}(Q)$ for any $\alpha<2s-1$, and finally,
by the bootstrap argument in~\cite[Theorem~6]{bfv}, 
we obtain the desired regularity. 

Also, since $(-\Delta)^s u\in L^\infty(Q)$, 
then by~\cite[Theorem 2.1]{kim} it follows
that~$u\in H^{2s}_p(Q)$ for every $p>1$.  
\end{proof} 

\begin{theorem}\label{ergodic2} 
Let  $\gamma>1$ and assume that $f\in W^{1, \infty}(\R^N) $. 

Then the following statements hold. 
\begin{enumerate} 
\item If $u$ is a continuous solution to \eqref{eqHJ},
then there exists a constant $K>0$, depending 
on~$\|f\|_{L^\infty(Q)}$, $\|\nabla f\|_{L^\infty(Q;\R^N)}$
and~$|\lambda |$, such that
\[\|\nabla u\|_{L^\infty(Q)}\leq K.\]
\item 
There exists a unique constant $\lambda\in\R$
such that~\eqref{eqHJ} has a periodic solution $u\in W^{1, \infty}(\R^N)$ and 
\[\lambda= \sup \{c\in\R\; {\mbox{ s.t. }}\; \exists u\in W^{1, \infty}(\R^N) 
{\mbox{ s.t. }} (-\Delta^s) u+ H(\nabla u)+ c\leq f(x)\}.\]
Moreover $u$ is the unique Lipschitz viscosity solution to \eqref{eqHJ}
up to addition of constants.
\end{enumerate} 
Finally, $u\in C^{2s+\alpha}(Q)\cap H^{2s}_p(Q)$, for every~$\alpha<1$
and every~$p>1$. 
\end{theorem}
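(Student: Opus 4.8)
The plan is to follow the same three-step scheme used for Theorem~\ref{ergodic}, the only structural change being that the a priori gradient bound must now be produced by a Bernstein-type argument rather than by the Ishii--Lions method. This is precisely what frees us from the constraint $\gamma\leq 2s$: the Bernstein approach exploits the coercivity of $H$ directly and works for every $\gamma>1$, at the price of requiring $f$ to be Lipschitz and of producing a bound that depends on $\|\nabla f\|_{L^\infty(Q;\R^N)}$ rather than on $\|u\|_{L^\infty(Q)}$. So I would first establish the gradient estimate of part~(1), then deduce the existence, uniqueness and variational characterization of $\lambda$ in part~(2) exactly as before, and finally run the regularity bootstrap, which is now improved because $f\in W^{1,\infty}(\R^N)\subset C^{\theta}(\R^N)$ for every $\theta<1$.

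For the gradient estimate I would invoke the fractional Bernstein method of~\cite{blt}. The formal idea is to differentiate~\eqref{eqHJ} in a direction $e$, obtaining $(-\Delta)^s u_e+\nabla H(\nabla u)\cdot\nabla u_e=f_e$, and then to study an auxiliary quantity such as $w=\tfrac12|\nabla u|^2$ (or, to respect the nonlocal structure, a suitable function of the increments $u(x+h)-u(x)$). At a maximum point $x_0$ of $w$ one has $\nabla w(x_0)=0$ and $(-\Delta)^s w(x_0)\geq 0$, and combining this with the differentiated equation the coercivity assumption $\nabla H(p)\cdot p-H(p)\geq C_H|p|^\gamma-K$ from~\eqref{Hass} lets one absorb the contribution of $f$, which is controlled by $|f_e|\leq\|\nabla f\|_{L^\infty(Q;\R^N)}$. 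The outcome is a bound on $|\nabla u(x_0)|$, hence on $\|\nabla u\|_{L^\infty(Q)}$, depending only on $\|f\|_{L^\infty(Q)}$, $\|\nabla f\|_{L^\infty(Q;\R^N)}$ and $|\lambda|$. I expect this step to be the main obstacle: because $(-\Delta)^s$ is nonlocal, one cannot localize the computation at the maximum point as in the classical case, and the nonlocal remainder terms generated by passing from $u$ to a nonlinear function of $\nabla u$ must be estimated carefully. This is exactly the technical content of~\cite{blt}, and the point is that the coercive term $|\nabla u|^\gamma$ dominates these remainders uniformly in $\gamma>1$.

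The existence of a unique ergodic constant $\lambda$ and of a Lipschitz viscosity solution $u$, unique up to additive constants, follows verbatim from~\cite[Theorem~1]{bcci}, since that result only needs coercivity of the Hamiltonian. The variational formula in part~(2) is then obtained by the same standard comparison argument as for~\eqref{defbwb}: if $v\in W^{1,\infty}(\R^N)$ satisfies $(-\Delta)^s v+H(\nabla v)+c\leq f$ with $c>\lambda$, subtracting the equation for $u$ and applying the Strong Maximum Principle for operators of the form $(-\Delta)^s+b\cdot\nabla$ with $s>\tfrac12$ and continuous $b$ (see~\cite{cio}) forces a contradiction, giving the claimed characterization as a supremum.

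Finally, for the regularity I would argue as in the last part of Theorem~\ref{ergodic}. Since $f\in W^{1,\infty}(\R^N)$ is in particular $\theta$-H\"older for every $\theta\in(0,1)$, the regularity theory of~\cite{cs} gives $u\in C^{1+\alpha}(Q)$ for every $\alpha<2s-1$, and the bootstrap of~\cite[Theorem~6]{bfv} upgrades this to $u\in C^{2s+\alpha}(Q)$ for every $\alpha<\theta$; letting $\theta\to1^-$ yields $\alpha<1$. Since then $(-\Delta)^s u\in L^\infty(Q)$, the estimate of~\cite[Theorem~2.1]{kim} gives $u\in H^{2s}_p(Q)$ for every $p>1$, completing the proof.
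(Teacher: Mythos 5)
Your proposal is correct and follows essentially the same route as the paper: the a priori gradient bound via the Bernstein method of~\cite{blt} (the paper cites \cite[Theorem~3.1]{blt}), the variational formula for $\lambda$ via the Strong Maximum Principle of~\cite{cio}, and the regularity bootstrap through~\cite{cs}, \cite[Theorem~6]{bfv} and \cite[Theorem~2.1]{kim}, exactly as in Theorem~\ref{ergodic}. The only minor discrepancy is in part~(2), where the paper invokes \cite[Proposition~4.1]{blt} (tailored to coercive Hamiltonians with $\gamma>1$) rather than \cite[Theorem~1]{bcci}, which the paper uses only in the regime $\gamma\leq 2s$ of Theorem~\ref{ergodic}; this is a citation-level difference, not a different argument.
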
 

\begin{proof} The a-priori estimates on the gradient is proved in~\cite[Theorem 3.1]{blt}. 
The existence of $\lambda$ and of a 
unique (up to constants) viscosity solution to~\eqref{eqHJ}
is given in~\cite[Proposition 4.1]{blt}.  

As for the rest we proceed  analogously as in Theorem \ref{ergodic}. 
\end{proof} 

\begin{remark}[Case $s\leq \frac{1}{2}$] \upshape  
We note that Theorem~\ref{ergodic2} holds for
every~$s\in (0,1)$. 
As for Theorem~\ref{ergodic},
it can be proved that,
if $\gamma<2s<1$, solutions to~\eqref{eqHJ}
are actually H\"older continuous, with
H\"older exponent striclty less
than~$\frac{2s-\gamma}{1-\gamma}$
(see Remark~1 in~\cite{bcci}).
In this case, the uniqueness of the ergodic
constant~$\lambda$ remains true, 
but it is not clear anymore that the solutions of the ergodic problem are unique up to an additive constant. 
 \end{remark} 
\section{Regularizing coupling}\label{sectionreg}

The aim of this section is to prove existence of solutions for~\eqref{mfg2}
in the case~$(1)$ of Theorem~\ref{TH:MAIN}. 
For this, we consider the system
\begin{equation}\label{mfg_reg}\begin{cases}
(- \Delta)^s u+ H(\nabla u)+\lambda = f[m](x),\\ 
(-\Delta )^s m-\div(m \nabla H(\nabla u) )=0, \\   \int_{Q} m\, dx=1, \end{cases}
\end{equation}
where $f : C^{\alpha}(Q) \to W^{1,\infty}(Q)$, with~$\alpha < 2s-1$, 
is a regularizing functional.   Let
\begin{equation}\label{Xdef}
X := \left\{ m \in C^{\alpha}(Q) : m \ge 0, \,  \int_{Q} m\, dx=1\right\}.
\end{equation}
We suppose that
\begin{equation}\label{assFnonlocal}
 \text{$f$ maps continuously $X$ into a bounded set of $W^{1,\infty}(Q)$}.
\end{equation}

A typical example of $f$ satisfying \eqref{assFnonlocal} is~$f[m](x) := g(x,K \star m (x))$, where $K : \R^N \to \R$ is a Lipschitz kernel and $g:\R^N\times \R\to\R$ is a Lipschitz function,   which is $\Z^N$-periodic in $x$. 

\begin{theorem}\label{solmfgreg}
Assume that \eqref{assFnonlocal} holds. Then, there exists a classical solution $(u, \lambda, m)$  
 to the mean field game system \eqref{mfg_reg}.
\end{theorem}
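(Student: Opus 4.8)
The plan is to set up a Schauder fixed point argument on the convex set $X$ defined in \eqref{Xdef}, following the classical scheme of~\cite{ll}. Given $m \in X$, assumption~\eqref{assFnonlocal} guarantees that $f[m] \in W^{1,\infty}(Q)$ with norm bounded uniformly in $m$. First I would solve the ergodic Hamilton-Jacobi equation $(-\Delta)^s u + H(\nabla u) + \lambda = f[m](x)$ by invoking Theorem~\ref{ergodic2} (applicable since $\gamma > 1$ and $f[m] \in W^{1,\infty}$): this yields a unique ergodic constant $\lambda = \lambda[m]$ and a solution $u = u[m]$, unique up to additive constants, which I normalize by $\min_Q u = 0$. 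The gradient bound in Theorem~\ref{ergodic2}(1) gives $\|\nabla u\|_{L^\infty(Q)} \leq K$ with $K$ depending only on $\|f[m]\|_{W^{1,\infty}(Q)}$ and $|\lambda|$, hence uniform in $m$ by~\eqref{assFnonlocal}; combined with the normalization this also bounds $\|u\|_{L^\infty(Q)}$ on the torus. Since $f[m]$ is Lipschitz, the regularity part of Theorem~\ref{ergodic2} yields $u \in C^{2s+\alpha'}(Q)$ for every $\alpha' < 1$.

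Second, using $\nabla u = \nabla u[m]$ (which is well defined, being independent of the additive normalization) I set $b := -\nabla H(\nabla u)$. From $|\nabla H(\pcors)| \le C_H |\pcors|^{\gamma-1}$ and the uniform gradient bound, $b \in L^\infty(Q;\R^N)$ with a bound independent of $m$. I then solve the stationary Fokker-Planck equation $(-\Delta)^s \tilde m + \div(b \tilde m) = 0$ with $\int_Q \tilde m = 1$ via Proposition~\ref{lemmaunoemezzo}: this gives a unique $\tilde m \in H^{2s-1}_p(Q)$ for all $p>1$, with $\|\tilde m\|_{H^{2s-1}_p(Q)} \le C$ and $0 < C \le \tilde m \le C^{-1}$, where $C$ depends only on $N$, $p$ and $\|b\|_{L^\infty(Q;\R^N)}$; in particular $\tilde m \in C^\theta(Q)$ for every $\theta \in (0, 2s-1)$. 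Setting $T(m) := \tilde m$ I obtain a map $T : X \to X$, since $\tilde m \geq 0$, $\int_Q \tilde m = 1$, and $\tilde m \in C^\theta \subseteq C^\alpha$ because $\alpha < 2s-1$. Moreover, fixing $\theta \in (\alpha, 2s-1)$, the image $T(X)$ is bounded in $C^\theta(Q)$, hence relatively compact in $C^\alpha(Q)$ by the compact embedding $C^\theta(Q) \hookrightarrow C^\alpha(Q)$.

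Third, I would establish continuity of $T$ on $X$ in the $C^\alpha$ topology, which is the crux of the argument. Take $m_n \to m$ in $C^\alpha(Q)$. By~\eqref{assFnonlocal}, $f[m_n] \to f[m]$ in $W^{1,\infty}(Q)$, in particular uniformly. The corresponding $u_n = u[m_n]$ are equibounded in $C^{2s+\alpha'}(Q)$ and the $\lambda_n$ are bounded; extracting a subsequence, $\lambda_n \to \bar\lambda$ and $u_n \to \bar u$ in $C^1(Q)$ (using $2s + \alpha' > 1$ and compactness). By stability of viscosity solutions, $(\bar u, \bar\lambda)$ solves the limit ergodic equation, so by the uniqueness statement in Theorem~\ref{ergodic2} one has $\bar\lambda = \lambda[m]$ and $\bar u = u[m]$; since the limit is unique, the full sequence converges, and in particular $\nabla u_n \to \nabla u[m]$ uniformly. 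Hence $b_n := -\nabla H(\nabla u_n) \to b := -\nabla H(\nabla u[m])$ in $L^\infty(Q;\R^N)$. Finally I pass to the limit in the Fokker-Planck equations: the uniform estimates of Proposition~\ref{lemmaunoemezzo} give compactness of $T(m_n) = \tilde m_n$ in $C^\alpha(Q)$, any limit solves the Fokker-Planck equation with drift $b$ and unit mass, and by the uniqueness in Proposition~\ref{lemmaunoemezzo} this limit equals $T(m)$, so $T(m_n) \to T(m)$ in $C^\alpha(Q)$.

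Then Schauder's fixed point theorem applied to the continuous, compact map $T$ on the closed convex set $X$ yields $m \in X$ with $T(m) = m$. The corresponding triple $(u[m], \lambda[m], m)$ solves~\eqref{mfg_reg}, and the regularity $u \in C^{2s+\alpha'}(Q)$ and $m \in H^{2s-1}_p(Q)$ established above shows it is a classical solution. I expect the main difficulty to lie in the continuity step, specifically in upgrading uniform convergence of $u_n$ to $C^1$ convergence of the gradients (needed to pass to the limit in the drift $b$), which relies on the uniform higher regularity from Theorem~\ref{ergodic2} together with the uniqueness of the ergodic pair $(u,\lambda)$.
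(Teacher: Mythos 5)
Your proposal is correct and follows essentially the same route as the paper: Schauder's fixed point theorem on $X$, solving the ergodic Hamilton--Jacobi equation via Theorem~\ref{ergodic2} and the Fokker--Planck equation via Proposition~\ref{lemmaunoemezzo}, with compactness of $T$ coming from the uniform $H^{2s-1}_p$/H\"older bounds and continuity from stability plus uniqueness of the (normalized) ergodic pair, which is exactly what the paper's terse appeal to ``stability of \eqref{Fdef}'' means. The one detail you gloss over---the uniform-in-$m$ bound on $|\lambda|$ needed before invoking the gradient estimate of Theorem~\ref{ergodic2}(1)---is immediate from \eqref{assFnonlocal} by evaluating the equation at extremum points of $u$ (as the paper does explicitly in the proof of Theorem~\ref{solmfgbdd}), so this is not a genuine gap.
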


\begin{proof} The statement follows by
the Schauder Fixed Point Theorem in~$X$
(we will follow the lines of~\cite[Section~3]{c14}).
More precisely, we construct a compact map~$T: X\to X$,
with~$T(m)=\mu$, as follows.  

For any $m \in X$, we consider the problem 
\begin{equation}\label{Fdef}(- \Delta)^s u+ H(\nabla u)+\lambda =f[m](x).\end{equation}
By Theorem~\ref{ergodic2}, since $f[m]$ is a Lipschitz function,
we get that there exists a unique solution~$(u, \lambda)\in 
W^{1,\infty}(Q)\times \R$. 
This implies, in particular, that $(-\Delta^s)u\in L^\infty(Q)$,
so $u\in H^{2s}_p(Q)$ for all $p>1$, thanks to~\cite{kim}.
Hence, by a bootstrap argument we get that $u\in C^{2s+\theta}(Q)$,
for all $\theta<1$. 

Now, we observe that $\|\nabla H(\nabla u)\|_{L^\infty(Q)}\leq C$, 
with a constant~$C>0$ independent of $m$, in virtue of Theorem~\ref{ergodic2}.

Let $\mu $ be the solution to 
\begin{equation}\label{eggregre}
\begin{cases} (-\Delta )^s \mu-\div(\mu \nabla H(\nabla u) )=0, \\   \int_{Q} \mu\, 
dx=1.\end{cases}
\end{equation}
By Lemma \ref{lemmaunoemezzo}, there exists a unique $\mu$ 
solution to~\eqref{eggregre}, and $\mu\in H^{2s-1}_p(Q)$ for all $p>1$,
with  $$\|\mu\|_{H^{2s-1}_p(Q)}\leq C\|\nabla H(\nabla u)\|_{L^\infty(Q)}.$$
Now, by Sobolev embedding, $\|\mu\|_{C^{\beta}(Q)}$ is bounded, for some $\alpha < \beta < 2s-1$.
So, $T: m \mapsto \mu$ is a compact mapping of $X$ into itself. 

Therefore, we only need to show that~$T$ is also continuous 
to conclude the existence of a fixed point, 
that in turn provides the existence of a solution to~\eqref{mfg_reg}. 
This follows by stability of the equation in~\eqref{Fdef}.
Indeed, for a given $f[m]$, the couple solving the first equation in~\eqref{Fdef} is unique, if we impose for example $u(0) = 0$
(see Theorem~\ref{ergodic2}). 
\end{proof}

\section{Local bounded coupling}
\label{sectionbounded}

Here we prove Theorem~\ref{TH:MAIN} under the assumptions
of case~$(2)$. For this, we now specify the setting in which we work. 

We assume that $f:\R^N\times [0, +\infty)\to\R $ 
is  a continuous function, $\Z^N$-periodic 
in $x$, that is $f(x+z, m)= f(x,m)$ for all $z\in \Z^N$, all~$x\in\R^N$ 
and all~$m\in [0, +\infty) $.
Moreover, we assume that  there exists  $K>0$ such that
\begin{equation}\label{assFlocal1}   |f(x,m)|\leq K \quad
\forall m\geq 0.   
\end{equation}
We also suppose that
\begin{equation}\label{assFlocal1BIS} 
1<\gamma\leq 2s.\end{equation}

In this framework, 
we get the following existence result, based on a
regularization argument and on the existence result given
in Theorem~\ref{solmfgreg}.
 
\begin{theorem}\label{solmfgbdd}
Under assumptions~\eqref{assFlocal1} and~\eqref{assFlocal1BIS},
there exists a classical solution $(u,\lambda, m)$
to the mean field game system~\eqref{mfg2}. 
\end{theorem}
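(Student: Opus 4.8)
The plan is to prove Theorem~\ref{solmfgbdd} by a regularization and compactness argument, reducing the case of a merely continuous bounded local coupling to the regularizing nonlocal case already handled in Theorem~\ref{solmfgreg}. The key idea is that the local coupling $f(x,m(x))$ can be approximated by nonlocal regularizing functionals to which Theorem~\ref{solmfgreg} applies, and then to pass to the limit using the uniform a-priori estimates coming from Theorem~\ref{ergodic} and Proposition~\ref{lemmaunoemezzo}, which in this regime $\gamma\le 2s$ depend only on $\|f\|_{L^\infty}$.

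First I would regularize the coupling. For $\eps>0$, set $f_\eps[m](x) := f\big(x, (m\star\chi_\eps)(x)\big)$, where $\chi_\eps$ is a standard mollifier; thanks to~\eqref{assFlocal1}, each $f_\eps$ is bounded by $K$ uniformly in $\eps$, and one checks that $f_\eps$ maps $X$ (as in~\eqref{Xdef}) continuously into a bounded subset of $W^{1,\infty}(Q)$, since $m\mapsto m\star\chi_\eps$ smooths $m$ and $f$ is continuous and bounded, so~\eqref{assFnonlocal} holds for $f_\eps$. Applying Theorem~\ref{solmfgreg} to each $f_\eps$, I obtain a classical solution $(u_\eps,\lambda_\eps,m_\eps)$ to the regularized system, with $\int_Q m_\eps\,dx=1$.

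Next I would establish uniform estimates. Since $\|f_\eps[m_\eps]\|_{L^\infty(Q)}\le K$ independently of $\eps$, Theorem~\ref{ergodic} (here is where the hypothesis $\gamma\le 2s$ is essential, because the Ishii-Lions gradient bound depends only on the $L^\infty$ norms) gives a uniform bound $\|u_\eps\|_{W^{1,\infty}(Q)}\le C$ and a uniform bound on $|\lambda_\eps|$; normalizing $u_\eps(0)=0$ fixes the additive constant. These bounds yield $\|(-\Delta)^s u_\eps\|_{L^\infty(Q)}\le C$ and hence uniform $C^{2s+\theta}(Q)$ bounds on $u_\eps$ for $\theta<2s-1$. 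Consequently $\|\nabla H(\nabla u_\eps)\|_{L^\infty(Q)}\le C$ uniformly, so Proposition~\ref{lemmaunoemezzo} gives a uniform bound $\|m_\eps\|_{H^{2s-1}_p(Q)}\le C$ for all $p>1$, with a uniform lower and upper bound $0<c\le m_\eps\le c^{-1}$, and in particular a uniform $C^\theta(Q)$ bound.

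Finally I would pass to the limit. By the uniform estimates and Arzel\`a--Ascoli, along a subsequence $u_\eps\to u$ in $C^{2s+\theta'}(Q)$, $\lambda_\eps\to\lambda$, and $m_\eps\to m$ in $C^{\theta'}(Q)$ for suitable $\theta'$; the limit retains the mass and positivity constraints. The main obstacle, and the step requiring the most care, is the convergence of the coupling term: I must show that $f_\eps[m_\eps](x)=f(x,(m_\eps\star\chi_\eps)(x))\to f(x,m(x))$. This follows because $m_\eps\to m$ uniformly and $m_\eps\star\chi_\eps\to m$ uniformly (since $\|m_\eps\star\chi_\eps-m_\eps\|_{L^\infty}\to 0$ uniformly using the equicontinuity from the uniform $C^\theta$ bound), combined with continuity of $f$; stability of viscosity solutions then passes the Hamilton-Jacobi equation to the limit, while the Fokker-Planck equation passes by the distributional formulation and the strong convergence of $\nabla H(\nabla u_\eps)$. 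This identifies $(u,\lambda,m)$ as a classical solution to~\eqref{mfg2}, completing the proof.
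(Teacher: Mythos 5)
There is a genuine gap at the very first step, and it is exactly the point the paper's proof is engineered to avoid. You define $f_\eps[m](x):=f\bigl(x,(m\star\chi_\eps)(x)\bigr)$ and assert that this satisfies \eqref{assFnonlocal}, i.e.\ maps $X$ into a bounded subset of $W^{1,\infty}(Q)$, ``since $m\mapsto m\star\chi_\eps$ smooths $m$ and $f$ is continuous and bounded.'' This is false in general: under assumption \eqref{assFlocal1} the coupling $f$ is only \emph{continuous} in $(x,m)$, not Lipschitz, so the composition $x\mapsto f\bigl(x,(m\star\chi_\eps)(x)\bigr)$ need not be Lipschitz in $x$ no matter how smooth $m\star\chi_\eps$ is. A trivial counterexample: take $f(x,m)=g(x)$ with $g$ continuous but not Lipschitz; then $f_\eps[m]=g\notin W^{1,\infty}(Q)$ for every $m$ and every $\eps>0$. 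Formally, $\nabla_x f_\eps[m]$ would involve both $\nabla_x f$ and $\partial_m f$, neither of which exists under the hypotheses of case~(2). Consequently Theorem~\ref{solmfgreg} cannot be invoked for your $f_\eps$, and the regularized solutions $(u_\eps,\lambda_\eps,m_\eps)$ on which the whole argument rests are not known to exist. The paper fixes this by a \emph{double} mollification, $f_\eps[m](x)=\bigl(f(\cdot,m\star\chi_\eps)\bigr)\star\chi_\eps(x)$: the outer convolution in $x$ manufactures the missing $W^{1,\infty}$ regularity (with norm controlled by $K\,\|\nabla\chi_\eps\|_{L^1}$, which is bounded on $X$ for each fixed $\eps$ --- uniformity in $\eps$ is not needed here), while preserving the uniform bound $\|f_\eps[m]\|_{L^\infty}\le K$ that drives the $\eps$-independent estimates.

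Once that step is repaired, the remainder of your argument coincides with the paper's proof: uniform bounds on $\lambda_\eps$ (the paper gets them by evaluating the Hamilton--Jacobi equation at the extrema of $u_\eps$), Ishii--Lions gradient bounds from Theorem~\ref{ergodic} depending only on $\|f\|_{L^\infty}$ (this is indeed where $\gamma\le 2s$ enters), Proposition~\ref{lemmaunoemezzo} for the uniform $H^{2s-1}_p$ and $C^\alpha$ bounds on $m_\eps$, and compactness plus stability to pass to the limit. Two smaller inaccuracies to note: your claim of uniform $C^{2s+\theta}(Q)$ bounds on $u_\eps$ immediately after the gradient bound is premature --- from $(-\Delta)^s u_\eps$ uniformly bounded in $L^\infty$ one only gets uniform $H^{2s}_p$, hence $C^{1+\alpha}$, bounds; the $C^{2s+\alpha}$ estimate requires first the uniform $C^\alpha$ bound on $m_\eps$ (hence on the coupling term), which is the order in which the paper proceeds. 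Also, the uniform positivity bounds $0<c\le m_\eps$ you invoke are not needed for this theorem and play no role in the paper's proof of it.
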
 

\begin{proof} 
We consider the following  regularization of the system \eqref{mfg2}: 
\begin{equation}\label{mfgeps}\begin{cases}
(- \Delta)^s u + H(\nabla u )+\lambda  =f_\eps[m](x),\\
(-\Delta )^s m -\div(m  \nabla H(\nabla u ) )=0, \\   \int_{Q} m \, dx=1, \end{cases}
\end{equation}
where
\[f_\eps[m](x)= f(x, m \star\chi_\eps)\star \chi_\eps (x)
=\int_Q \chi_\eps(x-y) f\left (y, \int_Q m (z)
\chi_\eps(y-z)dz\right)dy\]
and~$\chi_\eps$, for~$\eps>0$,
is a sequence of standard mollifiers.

Note that $f_\eps$ satisfies assumption~\eqref{assFnonlocal}
(see e.g.~\cite[Example 5]{c14}), and
therefore, for every $\eps>0$, 
there exists a classical solution~$(u_\eps, \lambda_\eps, m_\eps)$ 
to~\eqref{mfgeps}, thanks to Theorem~\ref{solmfgreg}.

Now, let $\overline x_\eps$ and $\underline x_\eps$ be
such that $u_\eps(\overline x_\eps)=\max u_\eps$
and~$u_\eps(\underline  x_\eps)=\min u_\eps$.
Evaluating the Hamilton-Jacobi equations at these points,
we get 
$$f_\eps[m_\eps](\underline x_\eps)-C_H^{-1}\leq \lambda_\eps
\leq f_\eps[m_\eps](\overline x_\eps)+C_H^{-1},$$
where~$C_H$ is the constant given in~\eqref{Hass}.
This and~\eqref{assFlocal1} imply that~$|\lambda_\eps|\leq \tilde C$,
for some~$\tilde C>0$.
So, up to passing to a subsequence,
we can assume that~$\lambda_\eps\to \lambda$, as~$\eps\to 0$. 

Again by assumption \eqref{assFlocal1},
using Theorem \ref{ergodic}, we get that there
exists~$C>0$, independent of $\eps$, such that  
\begin{equation}\label{ehwfhfv}
\|\nabla u_\eps\|_{L^\infty(Q)}\leq C.\end{equation} 
Hence, since $u_\eps$ solves~\eqref{mfgeps},
we get that~$(-\Delta)^s u_\eps$ is uniformly bounded
in~$L^\infty(Q)$, and so $u_\eps\in H^{2s}_p(Q)$,
for every~$p>1$, and~$\|u_\eps\|_{H^{2s}_p(Q)}\leq C$,
with $C>0$ independent of $\eps$. 

Therefore, by Sobolev embedding, we obtain
also that the sequence~$u_\eps$ is equibounded
in~$C^{1+\alpha}(Q)$, for some~$\alpha\in (0,1)$. 

Moreover, the estimate in~\eqref{ehwfhfv} and~\eqref{Hass}
imply that 
$$\|\nabla H(\nabla u_\eps)\|_{L^\infty(Q)}\leq C,$$
for some costant $C>0$. Then, we are in the position to apply
Proposition~\ref{lemmaunoemezzo}, and conclude that, 
for all~$p>1$ and~$\alpha\in (0, 2s-1)$,
there exist constant $C_1$, $C_2>0$, depending on~$K$
and on~$p$ and~$\alpha$, respectively, such that 
\begin{equation}\label{stimeun2}
\|m_\eps\|_{H^{2s-1}_p(Q)}\leq C\qquad {\mbox{ and }}\qquad
\|m_\eps\|_{C^{\alpha}(Q)}\leq C'. 
\end{equation}
This implies that, up to subsequences, 
$m_\eps\to m$ in $H^{2s-1}_p(Q)$, as~$\eps\to 0$,
for all~$p>1$ (and also uniformly in $C^{\alpha}(Q)$ for
every~$\alpha<2s-1$, thanks to Sobolev embeddings). 
Therefore~$f_\eps[m_\eps](x)$ is equibounded
in~$C^{\alpha}(Q)$, for every~$\alpha<2s-1$. 
Since $u_\eps$ solves~\eqref{mfgeps},
this, in turn, gives that~$u_\eps$ are equibounded
in~$C^{2s+\alpha}(Q)$, for some~$\alpha\in(0,1)$. 
Therefore, by Ascoli-Arzel\`a Theorem,
we can extract a converging subsequence~$u_\eps\to u$
in~$C^{2s}(Q)$, as~$\eps\to0$. 

Note that the convergences obtained are sufficiently strong to
pass to the limit in the equations, and so we conclude
that~$(u, \lambda, m)$ is
a classical solution to \eqref{mfg2}. 
This completes the proof of Theorem~\ref{solmfgbdd}.
\end{proof} 

\section{Local unbounded coupling} \label{sectionunbounded} 

As in Section~\ref{sectionbounded},
we consider the case in which the coupling~$f$ is local,
that is, we suppose that~$f:\R^N\times [0, +\infty)\to\R $ 
is locally Lipschitz continuous in both variables,
and~$\Z^N$-periodic 
in $x$, namely $f(x+z, m)= f(x,m)$ for all $z\in \Z^N$,
all~$x\in\R^N$ 
and all~$m\in [0, +\infty) $.

Differently from Section~\ref{sectionbounded}, here~$f$
can be unbounded, both from above and from below.
Nevertheless,  we have to restrict the condition on
the growth~$\gamma$ of the Hamiltonian~$H$.
In particular, we assume 
that  there exist $C>0$ and $K>0$ such that
\begin{equation}\begin{split} \label{assFlocal}  
&-Cm^{q-1}-K\leq  f(x, m) \leq C m^{q-1}+K,  \quad 
\text{ with } 1<q<1+\frac{(2s-1)}{N}\frac{\gamma}{\gamma-1},  \\ 
{\mbox{and }}& \left\{\begin{matrix}
1<\gamma< \frac{N}{N-2s+1} &\text{ if $N>1$},\\ 
1<\gamma\leq 2s   &\text{ if $N=1$}. \end{matrix}\right.
\end{split}\end{equation} 
Note that if $N>1$ then~$ \frac{N}{N-2s+1}<2s$,
in virtue of~\eqref{assFlocal}. 

We also remark that the bound on~$\gamma$ in the case~$N>1$
is just a technical assumption, that could be removed if the a  priori bounds on the gradients
of solutions to fractional Hamilton-Jacobi equations with coercive Hamiltonian stated in Theorem \ref{ergodic} could be shown to be independent on  
the $L^\infty$ norm of the solutions. 

To provide existence of a solution to~\eqref{mfg2} in this setting,
we follow the variational approach, see~\cites{ll, cgpt, c16, bc}. 
More precisely, we associate to the mean field game system 
an energy whose minimizers will be used to construct solutions to~\eqref{mfg2}. 

\subsection{The energy associated to the system}
We denote by~$\tilde L$ the Legendre transform of $H$, i.e.
\[
\tilde L(\qcors) :=  \sup_{\pcors \in \R^N} [\pcors \cdot \qcors - H(\pcors)], \qquad{\mbox{ for any }} \qcors
\in \R^N.
\]
Observe that, by~\eqref{Hass}, there exists $C_L>0$ such that
\begin{equation}\label{Lass}
C_L |\qcors|^{\gamma'} - C_L^{-1} \le L(\qcors) \le C_L^{-1} (|\qcors|^{\gamma'} + 1)\,, 
\quad {\mbox{ for any }} v \in \R^N,
\end{equation}
where $\gamma'=\frac{\gamma}{\gamma-1}$ is the conjugate exponent of $\gamma$. 
Note that, by assumption \eqref{assFlocal},
\begin{equation}\label{assgamma} \gamma'>\frac{N}{2s-1}.\end{equation} 
We let \begin{equation}\begin{split}
\label{kcalconstraint}
\mathcal{K}: =\,&\left\{ (m,w) \in L^1(Q)\cap W^{1,\gamma'} (Q)
\times L^{\gamma'}(Q)\; {\mbox{ s.t.}} \right. \\ &\quad
\int_{Q}  m  (-\Delta)^s \varphi \, dx = \int_{Q} w \cdot \nabla \varphi \, dx 
\quad \forall \varphi \in C_0^\infty(Q), \\ &\quad \left.
\int_{Q} m \, dx = 1, \quad \text{$m \ge 0$ a.e.}  \right\}.  
\end{split}\end{equation}
We associate to the mean field game \eqref{mfg2}  the following energy \begin{equation}\label{energia}
\mathcal{E}(m, w) := \begin{cases}
\displaystyle \int_{Q} m L\left(-\frac{w}{m}\right)+F(x,m) \, dx & \text{ if $(m,w) \in \mathcal{K}$}, \\
+\infty & \text{otherwise},
\end{cases}
\end{equation}
where 
\begin{equation}\label{dati}\begin{split}&
L\left(-\frac{w}{m}\right):=\begin{cases}  \tilde L\left(-\frac{w}{m}\right) & {\mbox{ if }}m>0,\\ 
0 & {\mbox{ if }}m=0, w=0,\\ +\infty & \text{otherwise} \end{cases} \\ {\mbox{and}}
\qquad&
F(x,m):=\begin{cases} \displaystyle
\int_0^m f(x,n)\,dn& {\mbox{ if }}m\geq 0,\\ +\infty& {\mbox{ if }}
m<0.\end{cases}\end{split}\end{equation} 
Note that, since $\tilde L$ is the Legendre transform of $H$, we have that, for all $m\geq 0$,
\begin{equation}\label{leg}
mH(p)=\sup_{w} \left[-p\cdot w-m L\left(-\frac{w}{m}\right)\right].
\end{equation} 
Moreover, recalling \eqref{Lass}, we get that \begin{equation}\label{ell} C_L \frac{|w|^{\gamma'}}{m^{\gamma'-1}} -C_L^{-1} m 
\leq L\left(-\frac{w}{m}\right)\leq C_L^{-1} \frac{|w|^{\gamma'}}{m^{\gamma'-1}} +C_L^{-1} m. \end{equation}

Now, we provide a-priori estimates for couples $(m,w)\in\mathcal{K}$ with finite energy.

\begin{proposition} \label{pstime}  
Assume that $(m,w)\in \mathcal{K}$ such that 
there exists $K>0$ with 
\[E:=\int_{Q} \frac{|w|^{\gamma'}}{m^{\gamma'-1}} dx\leq K.\] 

Then, there exist $\delta>0$ and $C>0$  such that 
\begin{equation}\label{rstima} 
\|m\|_{L^q(Q)}^{q(1+\delta)} \leq C\int_{Q} \frac{|w|^{\gamma'}}{m^{\gamma'-1}} dx\leq CK
\end{equation} where $q$ is as in \eqref{assFlocal}.

Moreover, for every~$\alpha\in \left(0, 2s-1-
\frac{N}{\gamma'}\right)$, there exists a constant $C>0$,
depending on $\alpha$, such that 
\begin{equation}\label{mstima} 
\|m\|_{C^{\alpha}(Q)}  \leq C\int_{Q} \frac{|w|^{\gamma'}}{m^{\gamma'-1}} dx\leq CK.
\end{equation} 
\end{proposition}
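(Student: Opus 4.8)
The plan is to use the only two ingredients available, namely the Fokker--Planck constraint in the definition of $\mathcal{K}$ and the kinetic bound $E\le K$, and to trade them for integrability of $m$ via the duality estimate of Lemma~\ref{lemmadue} followed by a fractional Sobolev embedding. The first step controls $w$ in Lebesgue norms. For $1\le r<\gamma'$, writing $|w|^r=\big(|w|^{\gamma'}/m^{\gamma'-1}\big)^{r/\gamma'}\,m^{r(\gamma'-1)/\gamma'}$ and applying H\"older with conjugate exponents $\gamma'/r$ and $\gamma'/(\gamma'-r)$, I would obtain
\begin{equation*}
\|w\|_{L^r(Q)}\le E^{1/\gamma'}\,\|m\|_{L^{p}(Q)}^{1/\gamma},\qquad p:=\frac{r(\gamma'-1)}{\gamma'-r}.
\end{equation*}
The constraint in~\eqref{kcalconstraint} then reads $\int_Q m\,(-\Delta)^s\varphi=\int_Q w\cdot\nabla\varphi\le \|w\|_{L^r(Q)}\|\nabla\varphi\|_{L^{r'}(Q)}$, so Lemma~\ref{lemmadue} applies with constant $\|w\|_{L^r(Q)}$ and yields $\|(-\Delta)^{s-\frac12}m\|_{L^r(Q)}\le \|w\|_{L^r(Q)}$.

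For~\eqref{rstima}, I would feed this into a Sobolev inequality. Since $(-\Delta)^{s-\frac12}m=(-\Delta)^{s-\frac12}(m-1)$ and $m-1$ has zero average, the fractional Gagliardo--Nirenberg--Sobolev inequality on $Q$ gives $\|m-1\|_{L^{r^*}(Q)}\le C\|(-\Delta)^{s-\frac12}m\|_{L^r(Q)}$ with $\frac1{r^*}=\frac1r-\frac{2s-1}{N}$, the constant part of $m$ being harmless because $\int_Q m\,dx=1$. Choosing $r$ (close to critical) so that $r^*=q$ and combining the two displays bounds $\|m\|_{L^q(Q)}$ by $C\,E^{1/\gamma'}\|m\|_{L^p(Q)}^{1/\gamma}$ up to lower order terms. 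The crucial point is that the subcriticality condition on $q$ in~\eqref{assFlocal}, namely $q<1+\frac{(2s-1)}{N}\gamma'$, is exactly equivalent to $p<q$: it rearranges to $q-1<\frac{(2s-1)}{N}\gamma'<q\,\frac{(2s-1)}{N}\gamma'$, which is precisely the inequality guaranteeing $p<r^*=q$ (one also checks $r<\gamma'$, hence $p$ finite, from~\eqref{assgamma}, which gives $\frac{2s-1}{N}>\frac1{\gamma'}$). Interpolating $\|m\|_{L^p}$ between $L^1$, where the norm is $1$, and $L^q$ then produces a power of $\|m\|_{L^q}$ strictly below $1$, so Young's inequality absorbs it into the left-hand side and closes the estimate. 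Tracking the resulting exponents in the regime of large $E$ (which is the relevant one, since~\eqref{rstima} is the sharp linear-in-$E$ bound needed to make the energy~\eqref{energia} coercive against the term $\int_Q F(x,m)\sim\int_Q m^q$) yields the power $q(1+\delta)$ with $\delta>0$, the strict positivity of $\delta$ being exactly the slack in $p<q$.

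For~\eqref{mstima}, I would first upgrade integrability by iterating: starting from $m\in L^q(Q)$, each application of the H\"older bound on $w$ together with Lemma~\ref{lemmadue} and the Sobolev embedding lowers $\tfrac1{r^*}$ by a fixed amount, so after finitely many steps $\|m\|_{L^p(Q)}\le C(K)$ for every finite $p$. With this in hand I take $r<\gamma'$ close enough to $\gamma'$ that $2s-1-\tfrac Nr>\alpha$; then $p=\frac{r(\gamma'-1)}{\gamma'-r}$ is finite, the H\"older bound gives $\|w\|_{L^r(Q)}\le C(K)\,E^{1/\gamma'}$, hence $\|m\|_{H^{2s-1}_r(Q)}\le C(K)$, and the Morrey-type embedding $H^{2s-1}_r(Q)\hookrightarrow C^{\alpha}(Q)$, valid for $\alpha<2s-1-\tfrac Nr$, yields~\eqref{mstima}. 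The admissible range $\alpha\in\big(0,\,2s-1-\tfrac N{\gamma'}\big)$ corresponds to letting $r\uparrow\gamma'$ and is nonempty exactly by~\eqref{assgamma}.

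The main obstacle, and the real heart of the argument, is the exponent bookkeeping of the second paragraph: one must verify that the single subcriticality hypothesis on $q$ simultaneously secures the (subcritical) validity of the Sobolev embedding, the finiteness $r<\gamma'$, and—most importantly—the strict inequality $p<q$ that makes the self-improving absorption work, together with the sharp powers that produce $\delta>0$. Everything else (the H\"older splitting, the interpolation, and the finite bootstrap) is routine once these relations among $q$, $\gamma'$, $s$ and $N$ are pinned down.
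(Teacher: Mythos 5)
Your blueprint --- H\"older splitting of $w$ against the kinetic term, the duality Lemma~\ref{lemmadue}, fractional Sobolev embedding, interpolation against $\|m\|_{L^1}=1$, and absorption --- is exactly the paper's proof: your relation $p=\frac{r(\gamma'-1)}{\gamma'-r}$ is the same as~\eqref{r}, and your two-step bound $\|w\|_{L^r(Q)}\le E^{1/\gamma'}\|m\|_{L^p(Q)}^{1/\gamma}$ followed by the pairing reproduces the paper's triple H\"older leading to~\eqref{starstar}. However, your specific parameter choice for~\eqref{rstima} has a genuine defect: you set $\frac1r=\frac1q+\frac{2s-1}{N}$ so that $r^\star=q$, and this forces $r\le 1$ (equivalently $p\le 1$, so your interpolation exponent $\theta$ leaves $[0,1]$) whenever $q\le\left(1-\frac{2s-1}{N}\right)^{-1}$, a nonempty range of $q$ permitted by~\eqref{assFlocal}, since that condition allows $q$ arbitrarily close to $1$. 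In that regime Lemma~\ref{lemmadue}, which requires $r>1$, is inapplicable and the scheme as written fails. The paper parametrizes from the other end: it fixes $p=q$, so that $\frac{1}{r_q}=\frac1{\gamma'}+\frac{1}{\gamma q}<1$ automatically, and exploits the strict inequality $q<r^\star$ of~\eqref{qrstar} to interpolate $L^q$ between $L^1$ and $L^{r^\star}$. Where your choice is admissible the two computations produce the identical final exponent $\gamma'\,\frac{2s-1}{N}\,\frac{q}{q-1}=(1+\delta)q$, so the repair is a reparametrization rather than a new idea --- but it is needed.

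The logic you attach to $\delta>0$ is also incorrect. By your own displayed chain, $p<q$ is equivalent to $q-1<q\,\frac{(2s-1)\gamma'}{N}$; since \eqref{assgamma} gives $\frac{(2s-1)\gamma'}{N}>1$, this holds for \emph{every} $q>1$, independently of the growth condition. So \eqref{assFlocal} is not ``exactly equivalent to $p<q$'', and the strict positivity of $\delta$ is not ``the slack in $p<q$'': with $p<q$ alone the absorption closes, but the resulting power $\gamma'\,\frac{2s-1}{N}\,\frac{q}{q-1}$ can be $\le q$, i.e.\ $\delta\le 0$, which would be useless for the coercivity argument in Theorem~\ref{exthm}. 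What forces $\delta>0$ is precisely the full condition $q<1+\frac{(2s-1)\gamma'}{N}$, which yields $\delta=\frac{1}{q-1}\left(1+\frac{(2s-1)\gamma'}{N}-q\right)$ exactly as in the paper; since you assume it anyway, the conclusion survives once the exponent is actually computed, but your stated justification would not detect the failure of~\eqref{rstima} outside that range. Finally, your route to~\eqref{mstima} is sound but heavier than necessary: the paper skips the bootstrap by choosing $p$ so large that $\frac{N}{2s-1}<r_p<\gamma'$ (possible by~\eqref{assgamma}) and absorbing once via~\eqref{starstar}, which also keeps the constant depending only on the data, whereas your iteration threads a dependence on $K$ through every step.
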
 

\begin{proof}
Note that since $m\in W^{1,\gamma'}(Q)$ and $\gamma'$
satisfies~\eqref{assgamma}, then $m\in L^p(Q)$,
for every~$p\in (1, +\infty]$. Moreover, by Sobolev embeddings,
we have that~$m\in C^{\alpha}(Q)$, for 
every~$\alpha\in \left(0, 2s-1-\frac{N}{\gamma'}\right)$.

Now, let $p>1$ and define $r_p$ as follows
\begin{equation}\label{r} \frac{1}{r_p}=\frac{1}{\gamma'}+\left(1-\frac{1}{\gamma'}\right)\frac{1}{p}.\end{equation} 
In this way, we see that~$r_p< \min\{p, \gamma'\}$. 

By~\eqref{kcalconstraint}, we have that
\begin{eqnarray*}
&&\int_Q m(-\Delta)^s \phi\, dx= \int_Q w\cdot \nabla \phi\, dx \leq 
\int_Q  \left(\frac{|w|^{\gamma'}}{m^{\gamma'-1}}\right)^{\frac{1}{\gamma'}}
m^{\frac{1}{\gamma}}|\nabla \phi|\,dx \\
&&\qquad \leq
\left(\int_Q  \frac{|w|^{\gamma'}}{m^{\gamma'-1}} \,dx \right)^{\frac{1}{\gamma}}
\|m\|_{L^p(Q)}^{\frac{1}{\gamma}}\|\nabla \phi\|_{L^{r'_p}(Q)} \leq E^{\frac{1}{\gamma'}} \|m\|_{L^p(Q)}^{\frac{1}{\gamma}}\|\nabla \phi\|_{L^{r'_p}(Q)},
\end{eqnarray*} 
for any~$\phi\in C^\infty_0(Q)$.
Here above we used the notation~$r_p'=\frac{r_p}{r_p-1}$.

Therefore, by Lemma \ref{lemmadue} we get that 
\begin{equation}\label{fbbbrb}
\|(-\Delta)^{s-\frac{1}{2}}m\|_{L^{r_p}(Q)}\le C
E^{\frac{1}{\gamma'}} \|m\|_{L^p(Q)}^{\frac{1}{\gamma}},
\end{equation}
for some~$C>0$.
Moreover, by interpolation, we get that 
\begin{equation}\label{fbbbrb2}
\|m\|_{L^{r_p}(Q)} \leq \|m\|_{L^p(Q)}^{\frac{1}{\gamma}} 
\|m\|_{L^1(Q)}^{\frac{1}{\gamma'}}=
\|m\|_{L^p(Q)}^{\frac{1}{\gamma}}.\end{equation} 
{F}rom~\eqref{fbbbrb} and~\eqref{fbbbrb2}, we conclude that
\begin{equation}\label{starstar}
\|m\|_{H^{2s-1}_{r_p}(Q)}\le C (E^{\frac{1}{\gamma'}} +1)\|m\|_{L^p(Q)}^{\frac{1}{\gamma}}.
\end{equation}

Now, we prove~\eqref{rstima}. For this, let $r=r_q$,
that is, in \eqref{r} we choose $p=q$, where $q$ is as in~\eqref{assFlocal}. 
Let $r^\star$ be such that 
\begin{eqnarray*}
&& \frac{1}{r^\star}=\frac{1}{r}-\frac{2s-1}{N} \qquad {\mbox{if }}
r< \frac{N}{2s-1},\\
{\mbox{and }} && r^\star=+\infty \qquad  {\mbox{if }}
r\geq \frac{N}{2s-1}.\end{eqnarray*} 
Notice that by \eqref{assFlocal}, it is easy to see that
\begin{equation}\label{qrstar}
q< r^\star.\end{equation} 
Therefore, by Sobolev embedding, there exists~$C>0$
such that
$$\|m\|_{H^{2s-1}_r(Q)}\geq C\|m\|_{L^q(Q)},$$ 
and so, substituting in \eqref{starstar} we get that
\[\|m\|_{L^q(Q)} \le C (E +1)\qquad\text{ and }\qquad
\|m\|_{H^{2s-1}_{r}(Q)}\leq C(E+1).\] 
Note that, in virtue of~\eqref{qrstar},
by interpolation and using~\eqref{starstar}, we get
\[\|m\|_{L^q(Q)}\leq\|m\|_{L^1(Q)}^{1-\theta}\|m\|_{L^{r^\star}(Q)}^\theta\leq\|m\|_{L^1(Q)}^{1-\theta}\|m\|_{H^{2s-1}_r(Q)}^\theta\leq C(1+E^{\frac{\theta}{\gamma'}})\|m\|_{L^q(Q)}^{\frac{\theta}{\gamma}}, \]
where $\theta$ is such that 
\[\frac{1}{q}=1-\theta+\frac{\theta}{r^\star}.\]
It is easy to check that 
\begin{equation}\label{teta} \frac{1}{\theta} = 1-\frac{1}{\gamma'} +\frac{2s-1}{N} \frac{q}{q-1}.\end{equation}
We then obtain that 
\begin{equation}\label{jgerjgerbj}
\|m\|_{L^q(Q)}^{\left(1-\frac{\theta}{\gamma}\right)
\frac{\gamma'}{\theta}} \leq  C(1+E). \end{equation}
Using \eqref{teta} we check that 
\[ \left(1-\frac{\theta}{\gamma}\right)\frac{\gamma'}{\theta}=\gamma' \frac{2s-1}{N}\frac{q}{q-1}= (1+\delta)q,\]
where 
\[\delta=\frac{1}{q-1}\left( \frac{\gamma' 
(2s-1)+N}{N}-q\right)>0\] 
thanks to~\eqref{assFlocal}. This and~\eqref{jgerjgerbj}
imply~\eqref{rstima}, as desired.

\smallskip 

We prove now \eqref{mstima}. 
In virtue of~\eqref{assgamma},
we can choose~$p$ in~\eqref{r} sufficiently large such
that 
$$\frac{N}{2s-1}<r_p<\gamma'.$$
So, from~\eqref{starstar}, using Sobolev embeddings and
reasoning as above, we obtain that 
\begin{equation}\label{okpuygm}
\|m\|_{L^p(Q)} \le C (E +1)\qquad\text{ and }\qquad  
\|m\|_{H^{2s-1}_{r_p}(Q)}\leq C(E+1).\end{equation}
{F}rom the second estimate in~\eqref{okpuygm} and
Sobolev embeddings, we obtain~\eqref{mstima}. 
This completes the proof of Proposition~\ref{pstime}.
\end{proof} 

Using the previous estimates in Proposition~\ref{pstime},
we deduce the existence of a minimizer of the energy in
the class~$\mathcal{K}$ introduced in~\eqref{kcalconstraint}. 

\begin{theorem} \label{exthm} 
There exists $(m,w)\in \mathcal{K}$ such that 
$$ \mathcal{E}(m,w)=\min_{(m,w)\in \mathcal{K}}\mathcal{E}\,.$$
\end{theorem}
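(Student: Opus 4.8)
The plan is to apply the direct method of the calculus of variations: show the infimum is finite, extract a minimizing sequence, obtain compactness from the a priori bounds of Proposition~\ref{pstime}, and conclude by lower semicontinuity.

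First I would check that the infimum is finite and that $\mathcal{E}$ is coercive. The pair $(m,w)=(1,0)$ belongs to $\mathcal{K}$, since $(-\Delta)^s\varphi$ has zero average on $Q$, and its energy $\int_Q \tilde L(0)+F(x,1)\,dx$ is finite by continuity of $f$; hence $\inf_{\mathcal{K}}\mathcal{E}<+\infty$. For the lower bound, combining the left inequality in~\eqref{ell} with the lower bound on $f$ in~\eqref{assFlocal}, and writing $E=\int_Q |w|^{\gamma'}/m^{\gamma'-1}\,dx$, I get
\[
\mathcal{E}(m,w)\ge C_L\,E-\frac{C}{q}\|m\|_{L^q(Q)}^q-(C_L^{-1}+K),
\]
where I used $\int_Q m\,dx=1$. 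By~\eqref{rstima} one has $\|m\|_{L^q(Q)}^q\le (CE)^{1/(1+\delta)}$ with $\delta>0$, so $\mathcal{E}(m,w)\ge C_L E-c\,E^{1/(1+\delta)}-c'$. Since $1/(1+\delta)<1$, the right-hand side is bounded below and tends to $+\infty$ as $E\to+\infty$; this yields both finiteness of $\inf_{\mathcal{K}}\mathcal{E}$ and the coercive fact that $E$ stays bounded along any sequence of bounded energy.

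Next I would take a minimizing sequence $(m_n,w_n)\in\mathcal{K}$, whose energy is eventually below $\mathcal{E}(1,0)+1$, so by coercivity $E_n:=\int_Q |w_n|^{\gamma'}/m_n^{\gamma'-1}\,dx$ is uniformly bounded. Proposition~\ref{pstime}, in particular~\eqref{mstima}, then gives a uniform bound on $\|m_n\|_{C^\alpha(Q)}$ for $\alpha\in\left(0,2s-1-\frac{N}{\gamma'}\right)$; by Arzel\`a--Ascoli, up to a subsequence $m_n\to m$ uniformly (and in $C^{\alpha'}$ for $\alpha'<\alpha$), whence $m\ge0$ and $\int_Q m\,dx=1$. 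Since $m_n\le\|m_n\|_{L^\infty(Q)}\le C$, the bound $E_n\le K$ forces $\|w_n\|_{L^{\gamma'}(Q)}^{\gamma'}\le C^{\gamma'-1}K$, so up to a further subsequence $w_n\rightharpoonup w$ weakly in $L^{\gamma'}(Q;\R^N)$. Passing to the limit in the weak formulation in~\eqref{kcalconstraint} (the left-hand side converges by uniform convergence of $m_n$ and boundedness of $(-\Delta)^s\varphi$, the right-hand side by weak convergence of $w_n$) shows that $(m,w)\in\mathcal{K}$.

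Finally I would establish lower semicontinuity of $\mathcal{E}$ along this subsequence. The potential term converges: as $m_n\to m$ uniformly with $m_n$ equibounded and $F(x,\cdot)$ continuous by~\eqref{dati}, $\int_Q F(x,m_n)\,dx\to\int_Q F(x,m)\,dx$. For the kinetic term, the integrand $(m,w)\mapsto m\,\tilde L(-w/m)$, with the conventions in~\eqref{dati}, is the perspective function of the convex function $\tilde L$, hence jointly convex and lower semicontinuous on $\R\times\R^N$; by the standard lower semicontinuity theorem for integral functionals whose integrand is convex in the weakly converging variable and continuous in the strongly converging one, with $m_n\to m$ strongly and $w_n\rightharpoonup w$ weakly, one obtains $\int_Q m\,L(-w/m)\,dx\le\liminf_n\int_Q m_n\,L(-w_n/m_n)\,dx$. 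Combining, $\mathcal{E}(m,w)\le\liminf_n\mathcal{E}(m_n,w_n)=\inf_{\mathcal{K}}\mathcal{E}$, and since $(m,w)\in\mathcal{K}$ this pair is the desired minimizer. The hard part is precisely this lower semicontinuity of the kinetic term: one must handle the singular convention of $L$ on $\{m=0\}$ and justify the convexity/semicontinuity argument under only weak convergence of $w_n$, which is where the perspective-function structure and Ioffe-type theorems are essential.
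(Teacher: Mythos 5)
Your proposal is correct and takes essentially the same route as the paper's proof: the direct method, with coercivity obtained by combining \eqref{ell}, the lower bound in \eqref{assFlocal} and the estimate \eqref{rstima} (so that the kinetic quantity $E_n$ is bounded along a minimizing sequence), compactness via the uniform $C^\alpha$ bound \eqref{mstima} for $m_n$ and weak $L^{\gamma'}$ convergence of $w_n$, passage to the limit in the constraint defining $\mathcal{K}$, and lower semicontinuity of the perspective-type kinetic term together with strong convergence of $m_n$ in the potential term. The only differences are expository: you additionally verify that $\mathcal{K}\neq\emptyset$ via the pair $(1,0)$ and spell out the Ioffe-type joint-convexity argument for the kinetic term, which the paper asserts without proof.
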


\begin{proof}
First of all observe that, by Proposition \ref{pstime}
and~\eqref{ell}, there exists $C>0$ such that, for every $(m,w)\in \mathcal{K}$,
\[\mathcal{E}(m,w)\geq C\|m\|_{L^q(Q)}^{(1+\delta)q} -C+\int_Q F(x,m)dx\,.\]
{F}rom this, recalling assumption \eqref{assFlocal} and the definition of $F$ in \eqref{dati},
we conclude that there exists a constant $K$, depending on $q$, such that 
\[\mathcal{E}(m,w)\geq C\|m\|_{L^q(Q)}^{(1+\delta)q} -C' \|m\|_{L^q(Q)}^{q}-C'\geq K.\]

Let $e:=\inf_{(m,w)\in\mathcal{K}} \mathcal{E}(m,w)$. 
We fix a minimizing sequence $(m_n,w_n)$. 
Therefore $\mathcal{E}(m_n,w_n)\leq e+1$, for every $n$ sufficiently large. 
Note that by our definition of $L$ this implies that~$w_n=0$ 
where~$m_n=0$. 

Therefore, again by assumption \eqref{assFlocal}, \eqref{ell} and Proposition \ref{pstime}, we get
\begin{eqnarray*} 
\int_Q  \frac{|w_n|^{\gamma'}}{m_n^{\gamma'-1}} 
\,dx&\leq & C_L^{-1}(e+1-\int F(x,m_n)dx))\\&\leq & 
C_L^{-1}(e+1-C+C\|m_n\|^q_{L^q(Q)})
\\ &\leq &C_L^{-1}\left(e+1+C'+ K  \left(\int_Q  
\frac{|w_n|^{\gamma'}}{m_n^{\gamma'-1}} \,dx+1
\right)^{\frac{1}{1+\delta}}\right).\end{eqnarray*} 
This implies in particular that~$
\left(\int_Q  \frac{|w_n|^{\gamma'}}{m_n^{\gamma'-1}} 
\,dx\right)$ is equibounded in $n$.  

By \eqref{mstima}, this implies that~$\|m_n\|_{C^{\alpha}(Q)}
\leq C$, for some $\alpha\in(0,1)$.
Therefore, up to a subsequence,
\[m_n\to m\quad \text{ uniformly in $Q$, as $n\to+\infty$.}
\] 
Therefore, we have that~$m_n\to m$ in $L^1(Q)$
and $\int_Q mdx=1$.

In particular, we see that~$0\leq m_n\leq C$, for every $n$,
and then 
\[\int_Q |w_n|^{\gamma'}dx\leq  C^{\gamma'-1} \int_Q  \frac{|w_n|^{\gamma'}}{m_n^{\gamma'-1}} \,dx. \] 
This implies that $w_n$ is equibounded in $L^{\gamma'}(Q)$
and so, up to a subsequence,
\[w_n\to w\quad \text{weakly in $L^{\gamma'}(Q)$, as $n\to+\infty$}.\] 

Note that the convergences are strong enough to assure
that~$( m, w)\in\mathcal{K}$. 
Then, the desired result follows from
the lower semicontinuity of the kinetic part of the functional
and by the strong convergence in $L^q(Q)$ of $m_n$. 
\end{proof} 

\subsection{Existence of solutions to the mean field game system} 

In order to construct a solution to the mean field game \eqref{mfg2}, we associate
to the energy in~\eqref{energia} a dual problem, using standard arguments in convex analysis. 
First of all, following \cite{bc}, we pass to a convex problem.

Given a minimizer $(\bar m, \bar w)$ as obtained in Theorem \ref{exthm}, 
we introduce  the following functional 
\begin{equation}\label{convex} 
J(m,w):= \int_{Q} m L\left(-\frac{w}{m}\right)+f(x,\bar m) m \,dx. 
\end{equation} 

We claim that for  $(m,w)\in\mathcal{K}$  we have that 
\[\int_{Q} m L\left(-\frac{w}{m}\right)dx-\int_{Q} \bar m L\left(-\frac{\bar w}{\bar m}\right)\geq -\int_Q f(x,\bar m) (m-\bar m) dx.\]
This can be proved as in \cite[Proposition 3.1]{bc},  using the convexity of $L$ and the regularity of $F$. 
The idea is to consider, for every $\lambda\in (0,1)$, 
$m_\lambda:=\lambda m +(1-\lambda)\bar m$ and the same definition 
for $w_\lambda$, and to observe that by minimality
\[\int_{Q} m_\lambda L\left(-\frac{w_\lambda}{m_\lambda}\right)dx-\int_{Q} 
\bar m L\left(-\frac{\bar w}{\bar m}\right)\geq -\int_Q F(x,\bar m_\lambda)-F(x,\bar m)  dx\,.\]
Then, using the
convexity to estimate the left hand side and the
regularity of $F$  on the right hand side, and finally sending $\lambda\to 0$,
we get that \[\min_{(m,w)\in\mathcal{K}} J(m,w)=J(\bar m, \bar w).\]
\smallskip

Now we complete the proof of Theorem~\ref{TH:MAIN}, by showing 
the last point~$(3)$.

\begin{theorem}\label{solmfg}
Let $(\bar m, \bar w)$ be a minimizer of $J$ as given by Theorem \ref{exthm}. 

Then, $\bar m\in H^{2s-1}_p(Q)$, for all~$p>1$,
and there exist~$\lambda\in \R$ and~$ u \in C^{2s+\alpha}(Q)$,
such that $( u, \lambda, \bar m)$
is a classical solution to the mean field game \eqref{mfg2}.
Finally  $\bar w=-\bar m \nabla H(\nabla   u)$. 
\end{theorem}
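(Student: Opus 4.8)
The plan is to exploit the convex structure of the (frozen) functional $J$ in \eqref{convex} and to identify its dual with an ergodic Hamilton--Jacobi problem, along the lines of \cite{bc}. Recall that $(\bar m,\bar w)$ minimizes the convex functional $J$ over the convex set $\mathcal{K}$. Since $\bar m$ is bounded (Proposition~\ref{pstime}) and $f$ is locally Lipschitz, the frozen coupling $g(x):=f(x,\bar m(x))$ lies in $L^\infty(Q)$, and in fact in $C^\alpha(Q)$; moreover $\gamma\le 2s$ in both instances of \eqref{assFlocal} (recall that $\frac{N}{N-2s+1}<2s$ when $N>1$). Hence Theorem~\ref{ergodic} applies to the equation $(-\Delta)^s u+H(\nabla u)+\lambda=g(x)$ and furnishes a unique constant $\lambda\in\R$, characterized by the formula \eqref{defbwb}, together with a Lipschitz viscosity solution $u$, unique up to additive constants.

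Next I would prove weak duality. For $(\bar m,\bar w)$ and the pair $(u,\lambda)$ above, the pointwise inequality encoded in \eqref{leg}, namely $mL(-w/m)\ge -w\cdot\nabla u-mH(\nabla u)$, combined with the constraint in \eqref{kcalconstraint} tested against $\varphi=u$ (so that $\int_Q m(-\Delta)^s u=\int_Q w\cdot\nabla u$) and with the equation solved by $(u,\lambda)$, yields $J(m,w)\ge\int_Q m\,[f(x,\bar m)-(-\Delta)^s u-H(\nabla u)]\,dx=\lambda$ for every $(m,w)\in\mathcal{K}$, using $\int_Q m\,dx=1$. Therefore $\min_{\mathcal{K}}J\ge\lambda$. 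The reverse inequality (absence of a duality gap) is the delicate point: I would obtain it by a Fenchel--Rockafellar argument on the convexified problem, exactly as in \cite[Proposition~3.1]{bc}, which requires a relaxation of $J$ and a suitable qualification condition. I expect this to be the main obstacle.

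Once $\min_{\mathcal{K}}J=\lambda$ is established, I would extract the optimality relations by a verification argument. Multiplying the Hamilton--Jacobi equation by $\bar m$, integrating, using $\int_Q\bar m\,dx=1$ and the constraint $\int_Q\bar m(-\Delta)^s u=\int_Q\bar w\cdot\nabla u$, one gets $\lambda=\int_Q[f(x,\bar m)\bar m-\bar w\cdot\nabla u-\bar m H(\nabla u)]\,dx$. Subtracting this from $J(\bar m,\bar w)=\lambda$ shows that $\int_Q[\bar m L(-\bar w/\bar m)+\bar w\cdot\nabla u+\bar m H(\nabla u)]\,dx=0$. Since the integrand is nonnegative by \eqref{leg}, it must vanish a.e., and the equality case of the Legendre--Fenchel inequality forces $-\bar w/\bar m=\nabla H(\nabla u)$ on $\{\bar m>0\}$; as $\bar w=0$ on $\{\bar m=0\}$ by the definition of $L$ in \eqref{dati}, we conclude $\bar w=-\bar m\,\nabla H(\nabla u)$ a.e.

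Finally I would upgrade the regularity. Substituting $\bar w=-\bar m\,\nabla H(\nabla u)$ into \eqref{kcalconstraint} shows that $\bar m$ solves $(-\Delta)^s\bar m-\div(\bar m\,\nabla H(\nabla u))=0$ weakly, with drift $b=-\nabla H(\nabla u)\in L^\infty(Q;\R^N)$, bounded by \eqref{Hass} since $\nabla u\in L^\infty$. Proposition~\ref{lemmaunoemezzo} then gives $\bar m\in H^{2s-1}_p(Q)$ for every $p>1$, $\bar m\in C^\theta(Q)$, and the two-sided bound $0<C\le\bar m\le C^{-1}$; in particular $\bar m>0$ retroactively legitimizes the division by $\bar m$ performed above. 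With $\bar m\in C^\theta(Q)$ the frozen coupling $f(\cdot,\bar m)$ is H\"older continuous, so the last part of Theorem~\ref{ergodic} upgrades $u$ to $C^{2s+\alpha}(Q)$, and a bootstrap closes the regularity. This proves that $(u,\lambda,\bar m)$ is a classical solution of \eqref{mfg2} with $\bar w=-\bar m\,\nabla H(\nabla u)$, completing the argument.
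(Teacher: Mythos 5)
Your overall strategy (duality against the frozen convex functional $J$, verification via the equality case of the Legendre--Fenchel inequality, then regularity through the Fokker--Planck equation) coincides with the paper's, and several pieces match its proof almost verbatim: the weak-duality computation, the complementary-slackness argument forcing $\bar w=-\bar m\,\nabla H(\nabla u)$ on $\{\bar m>0\}$ and $\bar w=0$ on $\{\bar m=0\}$, and the final regularity upgrade (the paper invokes Lemmata~\ref{lemmauno} and~\ref{lemmadue} where you invoke Proposition~\ref{lemmaunoemezzo}; this difference is immaterial, and your use of the lower bound $\bar m>0$ is a harmless bonus). However, there is a genuine gap at exactly the point you flag: the identity $J(\bar m,\bar w)=\lambda$. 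Your argument gives only $J(m,w)\ge c$ for every subsolution constant $c$, hence $\min_{\mathcal{K}}J\ge\lambda$ by \eqref{defbwb}; without the reverse inequality the whole verification step collapses, since it is precisely the exact equality $J(\bar m,\bar w)=\lambda$ that forces the nonnegative integrand $\bar m\,[L(-\bar w/\bar m)+\nabla u\cdot(\bar w/\bar m)+H(\nabla u)]$ to vanish. You defer this to ``a Fenchel--Rockafellar argument \ldots exactly as in \cite[Proposition~3.1]{bc}'', but that is a mis-citation: in this paper \cite[Proposition~3.1]{bc} is used for a different purpose, namely the linearization inequality showing that the minimizer $(\bar m,\bar w)$ of $\mathcal{E}$ also minimizes the frozen functional $J$ (a fact you implicitly rely on when you call $(\bar m,\bar w)$ a minimizer of $J$); it does not provide the strong duality you need.

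The paper closes this gap concretely: it introduces the Lagrangian $\mathcal{A}(m,w,u,c)$, writes $J(\bar m,\bar w)$ as an inf-sup over a relaxed class of pairs $(m,w)$, interchanges min and sup by the min--max theorem of \cite{et} (using convexity and weak$*$ lower semicontinuity of $\mathcal{A}$ in $(m,w)$ and linearity in $(u,c)$), and then applies Rockafellar's interchange theorem \cite{r} to reduce the inner minimization to a pointwise one; via \eqref{leg} this identifies $J(\bar m,\bar w)$ with $\sup\{c\in\R:\exists\, u \text{ s.t. } (-\Delta)^s u+H(\nabla u)+c\le f(x,\bar m)\}$, which equals $\lambda$ and is attained by Theorem~\ref{ergodic}. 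Note also that the natural shortcut you might hope for --- exhibiting a competitor realizing the value $\lambda$ by solving the ergodic problem, then the Fokker--Planck equation with drift $-\nabla H(\nabla u)$, and setting $w^*=-m^*\nabla H(\nabla u)$ --- runs into the membership requirement $m^*\in W^{1,\gamma'}(Q)$ in \eqref{kcalconstraint}, which the regularity $H^{2s-1}_p(Q)$ from Proposition~\ref{lemmaunoemezzo} (with $2s-1<1$) does not supply; so some version of the min--max machinery is genuinely needed. Until the strong-duality step is carried out, your proof is incomplete.
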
 

\begin{proof} 
The functional in~\eqref{convex} is convex, so we can write the dual problem as follows, following standard arguments in convex analysis. 
Recall that $\bar m\in C^{\alpha}(Q)$,
for any $\alpha\in \left(0, 2s-1-\frac{N}{\gamma'}\right)$,
thanks to Proposition~\ref{pstime}.

Now, we consider the following functional
\[\mathcal{A}(m,w,u, c):= \int_{Q} \left[ m L\left(-\frac{w}{m}\right)+f(x,\bar m) m -m(-\Delta)^{s} u+\nabla u \cdot w-c m \right]dx +c . \] 
It is easy to observe that 
\begin{equation}\label{m1} J(\bar m,\bar w)=
\inf_{ \{ (m,w)\in (L^1(Q)\cap C^{\alpha}(Q))\times L^1(Q), 
\ \int_Q m=1, m\geq 0\}}\sup_{(u,c)\in C^{2s }(Q)\times \R} 
\mathcal{A}(m,w,u, c),\end{equation}
so the infimum is actually a minimum. 

Note that $\mathcal{A}(\cdot, \cdot, u,c)$ is convex and
weak$*$ lower semicontinuous and $\mathcal{A}(m,w, \cdot, \cdot)$ is linear (so in particular concave).  
So we can use the min-max Theorem, see~\cite{et},
and interchange minimum and supremum, that is 
\begin{equation}\begin{split}\label{m2}
&\min_{\{(m,w)\in L^1\cap C^{\alpha}\times L^1,
\ \int m=1, m\geq 0\} }\sup_{(u,c)\in C^{2s}\times \R}
\mathcal{A}(m,w,u,c)\\
=&\,\sup_{(u,c)\in C^{2s}\times \R} \min_{\{(m,w)\in
L^1\cap C^{\alpha}\times L^1, \ \int m=1, m\geq 0\} }
\mathcal{A}(m,w,u, c).\end{split}\end{equation}

Finally, thanks to the Rockafellar's Interchange Theorem \cite{r}  between infimum and integral 
(based on measurable selection arguments
and the lower semicontinuity of the functional)
we get, using the fact that $H$ is the Legendre transform of $L$, that
\begin{equation}\begin{split}\label{m3}  
&\min_{\{(m,w)\in L^1\cap C^{\alpha}\times L^1, \ \int m=1, m\geq 0\} }
\mathcal{A}(m,w,u, c) \\ 
=\,&\int_{Q}\min_{m\geq 0,w} \left[ m L\left(-\frac{w}{m}\right)+f(x,\bar m) m -m(-\Delta)^{s} u+\nabla u \cdot w-c  m \right]dx+c\\ \nonumber 
=\,&\int_Q \min_{m\geq 0} m\left[-H(\nabla u) -(-\Delta)^{s} u+f(x,\bar m)-c\right]dx+c.
\end{split}\end{equation} 

Note that \[ \min_{m\geq 0} m\left[ 
 -(-\Delta)^{s} u-H(\nabla u)+f(x,\bar m)-c\right]=
\begin{cases} 0, &{\mbox{ if }} -(-\Delta)^{s} u -H(\nabla u) +f(x,\bar m)-c\geq 0,\\
-\infty, & {\mbox{ if }} - (-\Delta)^{s} u-H(\nabla u)  +f(x,\bar m)-c< 0.\end{cases}\] 
Therefore, from \eqref{m1}, \eqref{m2} and~\eqref{m3} we get that 
\begin{equation}\begin{split}
\label{m4} J(\bar m,\bar w)=&\,\sup_{(u,c)
\in C^{2s}\times \R}\int_Q \min_{m\geq 0}
m\left[-H(\nabla u) - (-\Delta)^{s} u+f(x,\bar m)-c\right]dx+c\\
=&\,\sup\left\{c\in \R \ | \exists u\in C^{2s},  \text{ s.t. }
(-\Delta)^{s} u+H(\nabla u) +c \leq f(x,\bar m)\right\}.
\end{split}\end{equation}
 Due to Theorem \ref{ergodic}, such supremum is actually a maximum:
there exist $\lambda\in\R$ and a periodic function~$u\in C^{2s+\alpha}(Q)\cap H^{2s}_p(Q)$,
for every $\alpha< 2s-1-\frac{N}{\gamma'}$ and every $p>1$, 
which is unique up to additive constants and solves  
\begin{equation}\label{hj}
(-\Delta)^{s} u+H(\nabla u) +\lambda =f(x,\bar m).\end{equation}

So, equality \eqref{m1} reads  \[\lambda= J(\bar m,\bar w)=\int_{Q} \bar m \left[ L\left(-\frac{\bar w}{\bar m}\right)+f(x,\bar m)\right]dx.\] 

Therefore, recalling that $\int_Q\bar m=1$ and using both \eqref{hj} and  \eqref{eqm} 
with test function $u$, we obtain that
\begin{eqnarray*}
0&=&\int_{Q} \bar m \left[ L\left(-\frac{\bar w}{\bar m}\right)+f(x,\bar m)-\lambda\right]dx\\
&=&
\int_{Q} \bar m \left[ L\left(-\frac{\bar w}{\bar m}\right)+(-\Delta)^{s} u+H(\nabla u) 
\right]dx\\ &=&\int_{Q} \bar m \left[ L\left(-\frac{\bar w}{\bar m}\right)+ \nabla u\cdot 
\frac{\bar w}{\bar m}+H(\nabla u) \right]dx. \end{eqnarray*} 
Using the fact that $H$ is the Legendre transform of $L$ and \eqref{leg}, we thus conclude
that  \[\frac{\bar w}{\bar m}=-\nabla H(\nabla u),\]
where $\bar m\neq 0$. 
Moreover, by the definition of $L$, we get that $\bar w=0$ where $\bar m=0$. 

In particular, recalling \eqref{eqm}, we find that $\bar m$ is a solution  of 
\[ (-\Delta)^{s}m - \div( m \nabla H(\nabla u))=0, \qquad{\mbox{ with }}
\quad \int_Q  m=1.\]
Since $\bar m\nabla H(\nabla u)\in L^\infty(Q)$, by Lemmata~\ref{lemmauno}
and~\ref{lemmadue}, we get that $\bar m\in H^{2s-1}_p(Q)$ for every $p>1$. 
This implies that $(u,\bar m)$ is a solution to \eqref{mfg2}.
The proof of Theorem~\ref{solmfg} is thus complete. 
\end{proof} 

\section{Further properties: improved regularity and uniqueness}\label{sectionimprovement}

If we assume some more regularity on $f$ and $H$, we can obtain more regular solutions. 

\begin{proposition}
Assume that $H\in C^{1+k}(\R^N)$, for some $k\geq 1$, and,
in the local case (under assumptions \eqref{assFlocal1} or
\eqref{assFlocal}), that~$f\in C^k(\R^N\times \R)$
or, in the nonlocal case (under assumption \eqref{assFnonlocal}), that~$f$ maps continuously~$X$ (as defined in \eqref{Xdef})
in a bounded subset of $C^{k}(Q)$. 
  
Then, the system in~\eqref{mfg2} admits a classical solution $(u,\lambda, m)\in C^{k}(Q) \times 
\R\times C^{k-1}(Q)$.
\end{proposition}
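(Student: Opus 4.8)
The plan is to establish improved regularity by a bootstrap argument, starting from the classical solution $(u,\lambda,m)$ whose existence is already guaranteed by Theorems~\ref{solmfgreg}, \ref{solmfgbdd} and~\ref{solmfg}, depending on which case of Theorem~\ref{TH:MAIN} we are in. In all cases we already know that $u\in C^{2s+\alpha}(Q)\cap H^{2s}_p(Q)$ for suitable $\alpha$ and all $p>1$, and that $m\in H^{2s-1}_p(Q)\subseteq C^\theta(Q)$ for every $\theta<2s-1$. The strategy is to alternate between the Hamilton-Jacobi equation and the Fokker-Planck equation, gaining derivatives at each step from the added regularity of $f$ and $H$, and to iterate until we reach the claimed $C^k\times C^{k-1}$ regularity.

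First I would treat the coupling term. In the nonlocal case, assumption~\eqref{assFnonlocal} is now strengthened so that $f$ maps $X$ into a bounded subset of $C^k(Q)$; hence the right-hand side $f[m](x)$ of the Hamilton-Jacobi equation lies in $C^k(Q)$. In the local case, since $f\in C^k(\R^N\times\R)$ and we already know $m\in C^\theta(Q)$, the composition $x\mapsto f(x,m(x))$ inherits regularity from that of $m$: if $m\in C^{j,\beta}(Q)$ then $f(\cdot,m)\in C^{\min\{k,j\},\beta}(Q)$ by the chain rule. Thus in either case the source term of the first equation in~\eqref{mfg2} has a definite (possibly low at the start) Hölder regularity.

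Next I would run the bootstrap. Given the current regularity of the source term, Schauder-type estimates for the fractional Hamilton-Jacobi equation (as in the regularity statements of Theorems~\ref{ergodic} and~\ref{ergodic2}, invoking \cite{cs}, \cite{bfv} and \cite{kim}) upgrade $u$: if $f(\cdot,m)\in C^{j,\beta}$ then $(-\Delta)^s u = f(\cdot,m)-H(\nabla u)-\lambda$ gains regularity, and since $H\in C^{1+k}$ the term $H(\nabla u)$ is as smooth as $\nabla u$ allows, so $u$ picks up roughly two orders of $s$ in the Hölder scale. With $u$ more regular, $\nabla H(\nabla u)$ becomes more regular (here $H\in C^{1+k}$ is essential so that $\nabla H$ is $C^k$), and then the Fokker-Planck equation $(-\Delta)^s m = \div(m\nabla H(\nabla u))$, via Lemmata~\ref{lemmauno} and~\ref{lemmadue}, upgrades $m$: writing $w=m\nabla H(\nabla u)$, improved regularity of $w$ in the Bessel scale yields $m\in H^{2s-1+\sigma}_p(Q)$ with larger $\sigma$, hence higher Hölder regularity of $m$. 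Feeding this back into the coupling term closes one cycle of the iteration. Repeating, each cycle raises the regularity of both $u$ and $m$ until the bound $k$ (imposed by the smoothness of $f$ and $H$) is reached, giving $u\in C^k(Q)$ and $m\in C^{k-1}(Q)$; the one-order gap reflects the first-derivative loss in passing $\nabla H(\nabla u)$ into the divergence form of the Fokker-Planck equation.

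The main obstacle I anticipate is bookkeeping the precise regularity indices through the fractional scale, in particular tracking how the non-integer order $2s$ interacts with integer Hölder classes and ensuring the iteration terminates exactly at $C^k\times C^{k-1}$ rather than overshooting or stalling. One must be careful that the gain per cycle is genuinely positive and bounded below, so that finitely many iterations suffice, and that at each step the compatibility between the Bessel-potential estimates (Lemma~\ref{lemmauno}) and the Hölder/Schauder estimates for the Hamilton-Jacobi equation is respected; the embeddings of Lemma~\ref{lemmazero} are the tool that bridges these two scales. A secondary subtlety, in the local unbounded case~(3), is that the starting Hölder exponent for $m$ is only $\alpha<2s-1-\tfrac{N}{\gamma'}$, so the first cycle begins from lower regularity, but this affects only the number of iterations, not the final outcome.
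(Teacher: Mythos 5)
Your proposal is correct and follows essentially the same route as the paper's proof: a bootstrap alternating between the Fokker--Planck equation (via Lemma~\ref{lemmauno}) and the Hamilton--Jacobi equation, gaining $2s-1$ derivatives per cycle and iterating $M=\left[\frac{k}{2s-1}\right]$ times until the smoothness of $f$ and $H$ caps the gain. The only inessential difference is that the paper runs the iteration entirely in the Bessel scale, tracking $m\in H^{2s-1}_p(Q)\to H^{4s-2}_p(Q)\to H^{6s-3}_p(Q)\to\cdots$, whereas you phrase some steps in the H\"older/Schauder scale, which the embeddings of Lemma~\ref{lemmazero} make equivalent.
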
 

\begin{proof} By Theorems \ref{solmfgreg}, \ref{solmfgbdd} and \ref{solmfg}, we have a solution $(u,\lambda, m)\in
(C^{2s+\alpha}(Q)\cap H^{2s}_p(Q))\times \R\times H^{2s-1}_p(Q)$. 

Using the regularity of $m$ and $\nabla H(\nabla u)$ and the fact that both are in $L^\infty$, 
 we get that $m\nabla H(\nabla u))\in H^{2s-1}_p(Q)$ for all $p>1$. 
This implies by Lemma~\ref{lemmauno} that $m\in H^{4s-2}_p(Q)$ for every $p>1$.

By the regularity of $f$, if $k\geq 4s-2$,  also $f(m)\in H^{4s-2}_p(Q)$. 
Therefore, using the Hamilton-Jacobi equation, we get that 
$$(-\Delta)^s u\in H^{2s-1}_p(Q)$$
for every $p>1$, which gives that~$u \in H^{4s-1}_p(Q)$.

Reasoning as above we obtain that  $m\nabla H(\nabla u))\in H^{4s-2}_p(Q)$ for all $p>1$,
and then by Lemma~\ref{lemmauno} we conclude that $m\in  H^{6s-3}_p(Q)$ for every $p>1$. 

We iterate the argument up to arriving to $m\in C^{M(2s-1)}(Q) $,
where $M:=\left[\frac{k}{2s-1}\right]$ (that is,
$M$ is the integer part of $\frac{k}{2s-1}$). In particular $m\in C^{k-1}(Q)$. 
So we conclude that $u\in C^{2s+N(2s-1)}(Q)$.  
\end{proof} \

It is well known that, under a monotonicity condition on the function $f$,
we have uniqueness of solutions to mean field games system.

\begin{theorem}\label{un} Assume that  
 in the local case  (under assumptions \eqref{assFlocal1} or  \eqref{assFlocal}) the map~$m\mapsto  f(x,m)$  is increasing for 
all $x\in Q$ or in the nonlocal case   (under assumption \eqref{assFnonlocal}) 
\[\int_Q (f[m_1](x)-f[m_2](x))(m_1-m_2) dx >0, \qquad {\mbox{for any }} m_1,m_2 \in X.\] 

Then, the system in~\eqref{mfg2} admits a unique classical solution 
$(u, \lambda, m)$, where $u$ is defined up to addition of constants. 
\end{theorem}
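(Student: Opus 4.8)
The plan is to establish uniqueness via the classical Lasry--Lions monotonicity argument. Suppose, for contradiction, that $(u_1,\lambda_1,m_1)$ and $(u_2,\lambda_2,m_2)$ are two classical solutions of \eqref{mfg2}. First I would show that the ergodic constant is unique, namely $\lambda_1=\lambda_2$; this follows directly from the characterization \eqref{defbwb} in Theorem~\ref{ergodic} (and Theorem~\ref{ergodic2}), since $\lambda$ is determined as a supremum that depends only on the right-hand side, combined with the fact that both solutions share the same coupling structure. With $\lambda:=\lambda_1=\lambda_2$ fixed, the core of the argument is to subtract the two Hamilton--Jacobi equations and the two Fokker--Planck equations and to test them against the ``crossed'' differences.

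The key computation proceeds as follows. I would subtract the Hamilton--Jacobi equations for $u_1$ and $u_2$, multiply by $(m_1-m_2)$, and integrate over $Q$; separately, subtract the Fokker--Planck equations for $m_1$ and $m_2$, multiply by $(u_1-u_2)$, and integrate over $Q$. Adding the two identities, the linear fractional Laplacian terms cancel after integration by parts, using that $(-\Delta)^s$ is self-adjoint on $Q$ and that the integrations are justified by the regularity $u_i\in C^{2s+\alpha}(Q)$ and $m_i\in H^{2s-1}_p(Q)$ established in Theorems~\ref{solmfgreg}, \ref{solmfgbdd} and~\ref{solmfg}. What remains is
\begin{equation*}
\int_Q \big(f(x,m_1)-f(x,m_2)\big)(m_1-m_2)\,dx
+ \int_Q \Big[ m_1\,\mathcal{D}(u_2,u_1) + m_2\,\mathcal{D}(u_1,u_2)\Big]\,dx = 0,
\end{equation*}
where $\mathcal{D}(u,v):=H(\nabla v)-H(\nabla u)-\nabla H(\nabla u)\cdot(\nabla v-\nabla u)$ is the Bregman-type remainder of the convex function $H$. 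By strict convexity of $H$ (assumed in~\eqref{Hass}), each $\mathcal{D}(u_i,u_j)\ge 0$, and since $m_1,m_2>0$ on $Q$ by Proposition~\ref{lemmaunoemezzo}, the second integral is nonnegative. The monotonicity hypothesis on $f$ forces the first integral to be nonnegative as well (strictly positive unless $m_1=m_2$). Since the sum is zero, both integrals must vanish.

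From the vanishing of the first integral together with strict monotonicity of $m\mapsto f(x,m)$ in the local case (or the strict inequality hypothesis in the nonlocal case), I would conclude $m_1=m_2$ almost everywhere, hence everywhere by continuity. Writing $m:=m_1=m_2$, both $u_1$ and $u_2$ then solve the same Hamilton--Jacobi equation \eqref{eqHJ} with right-hand side $f(\cdot,m)$ and the same constant $\lambda$; the uniqueness up to additive constants from Theorem~\ref{ergodic} (or Theorem~\ref{ergodic2}) gives $u_1=u_2+\text{const}$, completing the proof.

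The main obstacle I anticipate is the rigorous justification of the integrations by parts for the crossed terms: the solutions are only of finite smoothness ($u_i\in C^{2s+\alpha}$, $m_i\in H^{2s-1}_p$ with $2s-1<1$), so one must verify that $m_i$ and $\nabla u_i$ pair correctly against $(-\Delta)^s$ of the other solution. I would handle this by an approximation/density argument, mollifying and passing to the limit, exploiting the bounds $0<C\le m_i\le C^{-1}$ and the $L^\infty$ gradient estimates, which guarantee all the integrands are integrable and the limits are legitimate. A secondary subtlety is the nonlocal case, where the monotonicity is expressed as an integral inequality rather than pointwise; there the same crossed-testing identity applies verbatim with $f(x,m_i)$ replaced by $f[m_i](x)$, and the stated strict inequality directly yields $m_1=m_2$.
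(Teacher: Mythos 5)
Your proposal is correct and is exactly the argument the paper has in mind: the paper's proof of Theorem~\ref{un} consists of the single remark that ``the argument is standard, see \cite{ll}'', i.e.\ the Lasry--Lions crossed-testing/monotonicity computation that you carry out in full, including the cancellation of the $(-\Delta)^s$ terms by self-adjointness, the nonnegative Bregman remainders weighted by $m_i>0$ (Proposition~\ref{lemmaunoemezzo}), and the final appeal to uniqueness up to constants for the ergodic Hamilton--Jacobi problem.

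One small slip: you cannot deduce $\lambda_1=\lambda_2$ \emph{upfront} from \eqref{defbwb}, because the two right-hand sides $f(\cdot,m_1)$ and $f(\cdot,m_2)$ are a priori different, so the two suprema are taken over different constraints. The fix is immediate and standard: you do not need $\lambda_1=\lambda_2$ at that stage, since in the crossed identity the constants contribute $(\lambda_1-\lambda_2)\int_Q(m_1-m_2)\,dx=0$ by the common normalization $\int_Q m_i\,dx=1$; the equality $\lambda_1=\lambda_2$ then follows \emph{after} you have shown $m_1=m_2$, from uniqueness of the ergodic constant for the common right-hand side $f(\cdot,m)$ in Theorems~\ref{ergodic} and~\ref{ergodic2}. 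With that reordering your proof is complete and faithful to the paper's intended argument.
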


\begin{proof} 
The argument is standard, see \cite{ll}, and the adaptation to the fractional case is straightforward. 
\end{proof}

\end{document}